\documentclass[11pt]{amsart}
\usepackage{pinlabel}
\usepackage{amsmath, amsthm, amssymb, amscd, mathrsfs, eucal}
\usepackage{enumerate}
\usepackage[T1]{fontenc}
\usepackage{hyperref}
\usepackage{float}
\usepackage{mathtools}
\usepackage{color}
\usepackage[margin=1in]{geometry}
\usepackage{graphicx}
\usepackage{adjustbox}

\newtheorem{theorem}{Theorem}[section]
\newtheorem{lemma}[theorem]{Lemma}
\newtheorem{corollary}[theorem]{Corollary}

\newtheorem{proposition}[theorem]{Proposition}
\newtheorem{example}[theorem]{Example}

\newtheorem*{claim*}{Claim}

\theoremstyle{definition}

\newtheorem{convention}[theorem]{Convention}
\newtheorem{assumption}[theorem]{Assumption}

\theoremstyle{theorem}


%
%

\theoremstyle{remark}

\usepackage{color}
\usepackage{pdfcolmk}


\newcounter{jcomments}

\newcounter{acomments}

\newcommand{\rom}[1]{\text{\uppercase\expandafter{\romannumeral #1\relax}}}

\def\Z{\mathbb Z}
\def\N{\mathbb N}
\def\R{\mathbb R}

\def\defeq{\vcentcolon=}

\def\Hbb{\mathbb{H}}

\def\new{\mathrm{new}}


\numberwithin{equation}{section}

\usepackage{pdfpages}

\title{Irrational rotations and $2$-filling rays}
\author{Lvzhou Chen}
\address{Department of Mathematics\\ Purdue University\\ West Lafayette, IN, USA}
\email[L.~Chen]{lvzhou@purdue.edu}

\author{Alexander J. Rasmussen}
\address{Department of Mathematics \\ Stanford University \\ Stanford, CA, USA}
\email[A.J.~Rasmussen]{ajrasmus@stanford.edu}


\begin{document}

\begin{abstract}
We study a skew product transformation associated to an irrational rotation
of the circle $[0,1]/\sim$. This skew product keeps track of the number of times an orbit of the rotation lands in the two complementary intervals of $\{0,1/2\}$ in the circle. 
We show that
under certain conditions on the continued fraction expansion of the irrational
number defining the rotation, the skew product transformation has certain dense orbits.
This is in spite of the presence of numerous non-dense orbits. We use this to construct
laminations on infinite type surfaces with exotic properties. In particular, we
show that for every infinite type surface with an isolated planar end, there is
an \emph{infinite} clique of $2$-filling rays based at that end. These $2$-filling rays
are relevant to Bavard--Walker's \emph{loop graphs}.
\end{abstract}
\maketitle

\section{Introduction}
Our goal in this paper is to study skew products over irrational rotations on the circle
and to explore relationships to laminations on infinite type surfaces. In particular, we
prove that specific orbits are dense in a collection of skew product transformations.
We use this to
show that certain laminations on infinite type surfaces have dense boundary leaves.
Finally, we use this to construct certain rays on infinite type surfaces with exotic
properties, which are relevant to the study of Bavard--Walker's \emph{loop graphs}.

We consider the circle $S^1$ as the closed unit interval $[0,1]$ with $0$ and $1$ identified.
For a number $\alpha \in [0,1)$, we define the rotation $t=t_\alpha:S^1\to S^1$ by
$t(x)=x+\alpha$ modulo 1. We define the function $f:S^1\to \R$ by
$f=\chi_{[0,1/2)} -\chi_{[1/2,1)}$ where $\chi_E$ denotes the characteristic
function of the set $E$ in question. We define a resulting skew product transformation
$T=T_\alpha:S^1\times \Z \to S^1\times \Z$ by
\[
	T(x,s) \vcentcolon = (tx, s+fx) = (x+\alpha, s+fx).
\]
We endow $\Z$ with the discrete topology and $S^1\times \Z$ with the resulting
product topology. We consider the continued fraction expansion for $\alpha$,
\[
	\alpha=[0;a_1,a_2,\ldots] = \frac{1}{a_1 + \frac{1}{a_2 + \frac{1}{a_3 + \ldots}}}.
\]
We prove:

\begin{theorem}
	\label{theorem:skewprod}
	Suppose that the continued fraction expansion $\alpha=[0;a_1,a_2,\ldots]$ satisfies that
	$a_1\geq 5$ is odd and $a_n\geq 6$ is even for every $n>1$. Then, for any $s\in \Z$,
	the (forward) orbit $\{T^n(1/2,s)\}_{n=0}^\infty$ is dense in $S^1\times \Z$.
\end{theorem}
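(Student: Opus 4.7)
I would begin by unwinding the skew product: $T^n(\tfrac12, s) = \bigl(\tfrac12 + n\alpha,\ s + S_n f(\tfrac12)\bigr)$, where $S_n f(x) \defeq \sum_{k=0}^{n-1} f(x + k\alpha)$. Since $T$ commutes with the $\Z$-action on the second coordinate, we may reduce to the case $s = 0$. Because $\alpha$ is irrational, Weyl equidistribution already gives density of the first coordinate, so the content of the theorem is the following: for every nonempty open $U \subset S^1$ and every $m \in \Z$, there is some $n \geq 0$ with $\tfrac12 + n\alpha \in U$ and $S_n f(\tfrac12) = m$.

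The plan is to use the continued fraction denominators $q_k$ of $\alpha$ as return times at which both coordinates are tightly controlled. Since $f$ has zero integral and total variation $4$, Denjoy--Koksma gives $|S_{q_k} f(x)| \leq 4$ uniformly in $x$ and $k$, and one has $\tfrac12 + q_k\alpha \equiv \tfrac12 + (-1)^k \theta_k \pmod{1}$ with $\theta_k = |q_k\alpha - p_k| \to 0$. The hypotheses imply by induction on the recursion $q_{k+1} = a_{k+1} q_k + q_{k-1}$ that every $q_k$ is odd, which forces $S_{q_k} f(x)$, a sum of $q_k$ values $\pm 1$, to be an odd integer in $\{\pm 1, \pm 3\}$. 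The heart of the argument is an explicit combinatorial computation of the exact value $S_{q_k} f(\tfrac12)$: I would carry it out by invoking the three-gap theorem on $\{\tfrac12 + j\alpha : 0 \leq j < q_k\}$ and tracking which orbit points fall on each side of $\tfrac12$. Once these cocycle values at return times are known, an Ostrowski expansion $n = \sum_k c_k q_k$ with $0 \leq c_k \leq a_{k+1}$ lets one decompose $S_n f(\tfrac12)$ as a sum of block contributions: the $c_k$ of moderate index can be selected to match any prescribed integer $m$, while a high-index $c_K$ controls the first coordinate up to an error of size $O(\theta_{K-1})$, small enough to land in $U$. Combined, this yields the desired density.

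The main obstacle is the combinatorial computation of $S_{q_k} f(\tfrac12)$, together with estimates on the perturbed sums $S_{q_k} f(\tfrac12 + y)$ needed to control the cross-terms in the Ostrowski decomposition. The function $f$ has its discontinuity precisely at the basepoint of the orbit, so the Birkhoff sum is very sensitive to how the orbit interacts with $\tfrac12$. The lower bounds $a_1 \geq 5$ and $a_n \geq 6$ for $n > 1$ are there, presumably, to provide enough slack inside each block of length $q_k$ to rule out degenerate configurations in which orbit points cluster near $\tfrac12$ in a cancelling pattern; the parity hypotheses (odd $a_1$, even subsequent $a_n$) are tuned so that the contributions at successive convergents always add rather than cancel, so that every integer is eventually achieved. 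A natural way to keep the induction uniform in $k$ is to pass to the first-return map of the rotation to a small interval about $\tfrac12$, which is itself a rotation whose continued fraction inherits the same structural hypotheses, and to iterate.
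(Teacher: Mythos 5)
Your reduction to $s=0$ is correct, and two of your observations are genuinely useful: all continued-fraction denominators $q_k$ are odd under the stated hypotheses (by the recursion $q_{k+1}=a_{k+1}q_k+q_{k-1}$ with $q_0=1$, $q_1=a_1$ odd, and $a_{k+1}$ even), so combined with the Denjoy--Koksma bound $|S_{q_k}f(x)|\le \operatorname{Var}(f)=4$ one indeed gets $S_{q_k}f(x)\in\{\pm1,\pm3\}$ for every $x$. But the proposal stops exactly where the real work begins. First, you say you would compute $S_{q_k}f(1/2)$ by the three-gap theorem, but you do not do it; that computation is the crux, and it is delicate because the basepoint $1/2$ is a discontinuity of $f$ and the orbit clusters around $1/2$ at the scale of $\|q_k\alpha\|$. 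Second, the Ostrowski decomposition $n=\sum_k c_k q_k$ gives $S_nf(1/2)$ as a sum of blocks $S_{q_k}f$ evaluated at \emph{shifted} basepoints $t^j(1/2)$; since these shifted points approach $1/2$ at precisely the scale where $f$ jumps, each block's contribution is some odd integer in $\{\pm1,\pm3\}$ whose sign and size you do not control from the general tools you invoke. Knowing that the per-block contributions are bounded is far from knowing you can prescribe the total; you would need to prove the signs cooperate as you vary the $c_k$, and you offer no argument for that.

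It is also worth pointing out that your closing sentence --- ``pass to the first-return map of the rotation to a small interval about $1/2$, which is itself a rotation whose continued fraction inherits the same structural hypotheses, and iterate'' --- is in fact the paper's method, but you mention it as an afterthought rather than carrying it out. The paper does not use Ostrowski expansions, the three-gap theorem, or Denjoy--Koksma at all. Instead, it constructs a nested sequence of intervals $I_1\supset I_2\supset\cdots$ centered at $1/2$ (Proposition \ref{prop:firstreturns} and Lemmas \ref{lemma: inductive step, beta>0}--\ref{lemma: inductive step, beta<0}), shows the first-return map to $I_i$ is a rotation by $(-1)^{i+1}\alpha_i$ with a shifted continued fraction $\alpha_i=[0;a_i-1,a_{i+1},\ldots]$, and --- crucially --- records the exact sequences $\mathcal F_+^i,\mathcal F_-^i,\mathcal F_0^i$ of $f$-values accumulated between returns. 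The inductive formulas for these sequences (e.g.\ $\mathcal F_+^\new=\mathcal F_+\cdot\mathcal F_-^n\cdot\mathcal F_0\cdot\mathcal F_+^n$) are what actually let the paper show that the running minima $m_-^i$ and maxima $M_-^i$ of the partial Birkhoff sums diverge to $-\infty$ and $+\infty$ (Lemmas \ref{lemma: bounds for beta>0} and \ref{lemma: bounds for beta<0}), which is what delivers the claim that every integer level is reached arbitrarily close to $1/2$. That explicit bookkeeping is the missing ingredient your proposal would need, and nothing you write substitutes for it.
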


Note that for any $n\geq 0$, $T^n(x,s)=(t^nx, s+S_n(x))$ where
$S_n(x)=\sum_{i=0}^{n-1} f(t^i x)$ is the $n^{\text{th}}$ Birkhoff sum for $x$.
For any $m\in \Z$, we consider the set
$\Sigma(x,m) \vcentcolon = \{n \in \Z_{\geq 0} : S_n(x)=m\}$ of times $n$ at which
$S_n(x)$ is equal to $m$. Denote by $k+\Sigma(x,m)$ the set above translated by $k\in\Z$.
The following corollary, when $k=0$, is a restatement of Theorem
\ref{theorem:skewprod}. The general case, which is useful for our applications, also quickly follows from Theorem \ref{theorem:skewprod}; see the next section for its proof.

\begin{corollary}\label{cor: main}
	Suppose that the continued fraction expansion $\alpha=[0;a_1,a_2,\ldots]$ satisfies that
	$a_1\geq 5$ is odd and $a_n\geq 6$ is even for every $n>1$. Then, for any $k,m\in \Z$, the
	partial orbit $\{t^n(1/2)\}_{n\in k+\Sigma(1/2,m)}$ is dense in $S^1$.
\end{corollary}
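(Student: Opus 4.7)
The plan is to show that the corollary is a direct translation of Theorem~\ref{theorem:skewprod} into the language of Birkhoff sums, together with the trivial observation that an isometry of $S^1$ carries dense sets to dense sets.

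First I would unpack the formula $T^n(x,s) = (t^n x,\, s + S_n(x))$ noted just before the corollary. Since $\Z$ is discrete, a basis of neighborhoods of a point $(y,m') \in S^1 \times \Z$ consists of sets of the form $U \times \{m'\}$ with $U \subset S^1$ open. Thus density of $\{T^n(1/2,s)\}_{n \geq 0}$ in $S^1 \times \Z$ is equivalent to the following statement: for every $y \in S^1$, every open $U \ni y$, and every $m' \in \Z$, there exists $n \geq 0$ with $s + S_n(1/2) = m'$ and $t^n(1/2) \in U$.

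Next I would handle the case $k = 0$ by setting $s = 0$ and $m' = m$. The condition $S_n(1/2) = m$ is exactly $n \in \Sigma(1/2,m)$, so the equivalent statement above asserts precisely that $\{t^n(1/2)\}_{n \in \Sigma(1/2,m)}$ meets every nonempty open $U \subset S^1$, i.e., is dense.

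Finally, for general $k \in \Z$, I would observe that
\[
	\{t^n(1/2)\}_{n \in k + \Sigma(1/2,m)} \;=\; t^k\bigl(\{t^j(1/2)\}_{j \in \Sigma(1/2,m)}\bigr),
\]
and since $t^k$ is an isometry (in particular a homeomorphism) of $S^1$, it maps the dense set from the $k=0$ case to a dense set. This completes the argument. There is no real obstacle here; the content is entirely in Theorem~\ref{theorem:skewprod}, and the corollary is a bookkeeping reformulation convenient for later applications.
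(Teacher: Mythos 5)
Your proof is correct and matches the paper's approach: translate density in $S^1\times\Z$ to the Birkhoff-sum condition (the $k=0$ case being an exact reformulation of Theorem~\ref{theorem:skewprod}), then note that the general case is obtained by applying the homeomorphism $t^k$, which preserves density. The paper states this more tersely but takes the identical route.
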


In particular, for any $m\in\Z$, there are infinitely many $n\ge0$ with $S_n(1/2)=m$.
In contrast, it is shown in \cite[Theorem 1]{Heavy} (by the characterization of $\mathcal H_{2}$) that 
$S_n(1/2)<0$ for all $n\ge 1$ if $\alpha=[0; a_1,a_2,\ldots]$ with $a_i$ even for all $i$ odd.
In addition, for almost every $\alpha$, there is an uncountable set (with Hausdorff dimension equal to some constant $c\in(0,1)$ independent of $\alpha$) of initial points $x\in[0,1)$ with $S_n(x)\le0$ for all $n\ge1$ \cite{1/2-heavy}. 

We use our results above to construct examples of interesting laminations and rays on infinite type
surfaces. For the first statement, recall that a complete hyperbolic surface $X$ is of the
\emph{first kind} if it is equal to its convex core. A geodesic lamination $\Lambda$ on $X$
is \emph{topologically transitive} if it contains a leaf which is dense in $\Lambda$.

\begin{theorem}
	\label{theorem:lamination}
	Let $S$ be any orientable infinite type surface with at least one isolated puncture. 
	Then there is a hyperbolic surface $X$ of the first kind homeomorphic to $S$, and
	a geodesic lamination $\Lambda$ on $X$, such that $\Lambda$ is topologically transitive,
	with infinitely many leaves which are not dense in $\Lambda$.
\end{theorem}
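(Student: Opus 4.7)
My strategy has four parts: (i) build a model hyperbolic surface $X_0$, homeomorphic to a specific infinite type surface $S_0$ with one distinguished isolated puncture $p_0$, on which the skew product $T_\alpha$ of Theorem \ref{theorem:skewprod} is realized as a first-return map for the geodesic flow; (ii) promote the dense orbit of Corollary \ref{cor: main} to a dense leaf of a lamination $\Lambda_0$ on $X_0$; (iii) exhibit infinitely many non-dense leaves of $\Lambda_0$; and (iv) transfer $(X_0,\Lambda_0)$ to a surface of the prescribed topological type.

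For step (i), I would take $X_0$ to be a suitable $\Z$-cover of a compact hyperbolic surface $Y$ with a single cusp, with deck group $\Z$ acting by shifting the coordinate $s$. After an appropriate choice of metric on $Y$, a short horocycle at $p_0$ in $X_0$ is parametrized by $S^1$, and the first-return map to this horocycle along geodesics that loop once through $Y$ is the rotation $t_\alpha$; the function $f=\chi_{[0,1/2)}-\chi_{[1/2,1)}$ corresponds to the $\Z$-valued cocycle recording which of two generators of the deck group is hit on each loop. Under this dictionary, pairs $(x,s) \in S^1\times \Z$ parametrize the geodesic rays emanating from $p_0$, and the $T$-orbit of $(1/2,0)$ is realized by a single bi-infinite geodesic $\ell_0$ that repeatedly returns to $p_0$.

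For (ii), let $\Lambda_0$ be the closure of $\ell_0$ in the space of geodesics on $X_0$. By Corollary \ref{cor: main} the $T$-orbit of $(1/2,0)$ is dense in $S^1\times\Z$, so $\ell_0$ is dense in $\Lambda_0$ and $\Lambda_0$ is topologically transitive. For (iii), the same skew product has many non-dense orbits alongside the dense one: for each $N$ there exist starting points $(x,s)$ with $x\neq 1/2$ whose orbits remain inside the strip $S^1\times[-N,N]$, either because the Birkhoff sums $S_n(x)$ are bounded in a half-line, or because the orbit only sees one side of the discontinuity of $f$ for a long time. Each such orbit corresponds to a leaf of $\Lambda_0$ confined to a bounded subregion of $X_0$, and hence not dense in $\Lambda_0$; varying $N$ produces infinitely many such leaves.

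For step (iv), the lamination $\Lambda_0$ together with a thickened neighborhood occupies a subsurface $\Sigma_0 \subset X_0$ of a specific controlled topological type containing $p_0$, while the rest of $X_0$ can be adjusted freely within the class of infinite type surfaces. Given any orientable infinite type $S$ with an isolated puncture, one performs standard infinite-type surgery to replace $X_0\setminus \Sigma_0$ by a hyperbolic surface of the first kind whose union with $\Sigma_0$ is homeomorphic to $S$, using flexible gluings of cusped pair-of-pants and funnels. The main obstacle is step (i): carefully arranging the hyperbolic metric on $Y$ so that the horocyclic return map really is $t_\alpha$ and the deck-group cocycle really is $f$. Once this geometric dictionary is in place, steps (ii)--(iv) follow from Corollary \ref{cor: main} together with routine constructions on infinite type surfaces.
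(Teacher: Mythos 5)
The overall plan---realize the skew product combinatorially, turn the dense orbit into a dense leaf, exhibit a non-dense leaf, and transfer to arbitrary topology---is the right shape. But step (i), which you yourself flag as ``the main obstacle,'' is not a detail to be arranged later; as stated it does not work, and fixing it is essentially the whole construction.

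The claim that one can choose a hyperbolic metric on a cusped finite-area surface $Y$ so that ``the first-return map to a short horocycle along geodesics that loop once through $Y$ is the rotation $t_\alpha$'' has no justification and is almost certainly false. First-return dynamics of the geodesic flow to a horocycle is governed by continued-fraction/Gauss-map type behavior, not by a single circle rotation; and there is no reason a $\Z$-cover cocycle along such returns would be exactly $f=\chi_{[0,1/2)}-\chi_{[1/2,1)}$. The paper avoids geodesic-flow dynamics entirely: it builds a weighted \emph{train track} $\tau$ whose union of foliated rectangles has first-return map to a transversal equal to the rotation $t_\alpha$ (this is a combinatorial/interval-exchange fact, not a geodesic-flow fact), lifts to a $\Z$-cover of the track, embeds the track in a carefully designed surface $\Sigma$, and only then straightens train paths to geodesics, using Lemma~\ref{lemma:straighten} to guarantee this succeeds. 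Leaves of the lamination are not geodesic-flow trajectories, and conflating the two is the core error in your step (i).

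Two further gaps. For the non-dense leaves, you assert there are orbits of $T_\alpha$ confined to a bounded strip but give no construction. The prior results quoted in the introduction control only the \emph{forward} Birkhoff sums, whereas a non-dense leaf requires control of the \emph{bi-infinite} orbit; the paper supplies this in Example~\ref{example: non dense full orbit} by exploiting a symmetry of a specific orbit and a specific $\alpha=[0;2m+1,2m+2,2m+2,\ldots]$, and then produces infinitely many non-dense leaves by taking the $\Z$-translates of that single leaf (not by varying a bound $N$). You would also need to show these non-dense geodesics actually lie in $\Lambda_0=\overline{\ell_0}$ and that the closure is a lamination at all; the paper proves closedness in Lemma~\ref{lemma:lambdaclosed}. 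Finally, step (iv) requires the lamination to be carried on a subsurface whose complement is topologically ``free'' so that arbitrary infinite-type topology can be attached; this is a design constraint built into the paper's flute-plus-black-disks surface (Lemma~\ref{lemma: realize S}), but a $\Z$-cover of a fixed finite-area $Y$ has rigid topology and gives no such room, so ``standard infinite-type surgery'' is not available without first redesigning $X_0$.
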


For our second application, we consider the \emph{loop graph} $L(S;p)$ of an infinite type
surface $S$ with an isolated puncture $p$, defined by Bavard in \cite{RayGraph} and studied
further by Bavard--Walker in \cite{GromBd} and \cite{Simult}. The vertices of $L(S;p)$ are the
simple, essential loops on $S$ asymptotic to $p$ on both ends, considered up to isotopy.
Two isotopy classes are joined by an edge when the corresponding isotopy classes can be
realized disjointly.
The graph $L(S;p)$ is Gromov-hyperbolic and of infinite diameter \cite{Simult}; see also \cite{Aramayona}. 
Bavard--Walker \cite{Simult} identified the points on the Gromov boundary of $L(S;p)$ with cliques of
the so-called high-filling rays.
As a related notion, a \emph{$2$-filling ray} $\ell$ on $S$ is a kind of
\emph{fake boundary point} for $L(S;p)$. Namely, such a ray is asymptotic to $p$,
and intersects every loop on $S$, so that it has strong filling properties similar to high-filling rays,
but it is not high-filling.
See Section \ref{sec:background} for the precise definitions.

Bavard--Walker asked in \cite[Question 2.7.7]{GromBd} whether $2$-filling rays exist, for instance, when $S$ is the plane minus a Cantor set. This was answered affirmatively by the authors in \cite{2Fill}. Such $2$-filling rays always come organized into families of mutually disjoint $2$-filling rays called \emph{cliques}.
The authors showed that the cliques can have any finite cardinality in \cite[Theorem 5.1]{2Fill}, and asked
whether such cliques can be infinite \cite[Question 5.7]{2Fill}. We answer this question affirmatively in Theorem \ref{theorem:infiniteclique} below for any infinite type surface $S$ with an isolated puncture. In particular, $2$-filling rays exist on all such surfaces.
The analogous problem about the size of cliques of high-filling rays has been solved by methods different from our dynamical approach: 
such a clique can be of any finite cardinality on any infinite type surface $S$ with an isolated puncture by \cite[Theorem 8.1.3]{GromBd},
and it can also be infinite at least when $S$ is the plane minus a Cantor set by \cite{infinite_clique}.

\begin{theorem}\label{theorem:infiniteclique}
	Let $S$ be an orientable infinite type surface with at least one isolated puncture $p$. 
	Then there exists an infinite clique of $2$-filling rays on $S$ based at $p$.
\end{theorem}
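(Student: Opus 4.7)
The plan is to extract the infinite clique directly from the lamination $\Lambda$ produced by Theorem~\ref{theorem:lamination}. That theorem supplies a hyperbolic surface $X$ of the first kind with $X \cong S$ and a topologically transitive geodesic lamination $\Lambda$ on $X$ having infinitely many non-dense leaves. These non-dense leaves are precisely the boundary leaves of $\Lambda$. The first step is to verify (from the construction of $\Lambda$) that each such boundary leaf has a half-leaf that exits $X$ through the isolated puncture $p$; since the construction is driven by the skew product coding orbits near the fixed point of the rotation, these boundary leaves naturally spiral into $p$. Truncating each boundary leaf at $p$ produces an infinite family $\{\ell_i\}$ of simple geodesic rays on $S$ based at $p$.

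The clique property is then essentially automatic: distinct leaves of a geodesic lamination are disjoint, and a half-leaf inherits disjointness from every other leaf, so the $\ell_i$ are pairwise disjoint simple essential rays at $p$. It remains to establish the $2$-filling condition for each $\ell_i$, which will be carried out in two parts, matching the definition recalled in Section~\ref{sec:background}: (i) $\ell_i$ intersects every essential loop on $S$, and (ii) $\ell_i$ is not high-filling.

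For (i), my strategy is to exploit topological transitivity of $\Lambda$. Any essential simple loop $\gamma$ in $S$ must meet $\Lambda$ nontrivially (otherwise $\gamma$ would be contained in a complementary region, which the construction forces to be topologically trivial or peripheral). Hence $\gamma$ crosses some leaf $\lambda$, and by transitivity one may take $\lambda$ to be dense in $\Lambda$; in particular $\lambda$ accumulates on the boundary leaf containing $\ell_i$. The usual geodesic lamination argument, that transverse intersections persist under Hausdorff limits of geodesics, then forces an essential intersection of $\gamma$ with $\ell_i$. For (ii), I plan to use the fact that $\ell_i$ is a boundary leaf of $\Lambda$: on its "free" side there is an entire complementary region, and the skew-product construction, together with the precise combinatorial control given by Corollary~\ref{cor: main}, produces embedded configurations of arcs on that side that witness the failure of the high-filling criterion (by producing loops or rays whose intersection pattern with $\ell_i$ is too sparse or too structured to satisfy high-filling).

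The main obstacle is step (i), specifically ensuring that the intersections detected via an approximating dense leaf $\lambda$ survive as \emph{essential} intersections of $\gamma$ with the limiting ray $\ell_i$, rather than being removable by isotopy near $p$. This is where Corollary~\ref{cor: main} is indispensable: it guarantees that the orbit of $1/2$ under the rotation returns densely to every arc of $S^1$ at prescribed Birkhoff-sum values, which translates into $\Lambda$-leaves returning to every scale near $p$ with controlled winding. That control is what pins the approximating intersections to essential intersections with $\ell_i$, and simultaneously produces the disjoint configurations used to rule out high-filling.
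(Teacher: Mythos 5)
Your proposal rests on a misidentification of which leaves are which, and the misidentification propagates into a structural error about where the $2$-filling rays live.

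First, you assert that the non-dense leaves produced in Theorem~\ref{theorem:lamination} are precisely the boundary leaves of $\Lambda$. This is backwards. In the paper's construction, the boundary leaves of the complementary region of $\Lambda$ containing $p$ are the leaves $L_i$, which are the straightenings of the singular leaves $\ell_i$ of the foliation $\widetilde F$. By Lemma~\ref{lemma:denseleaves} (driven by Corollary~\ref{cor: main}), these $L_i$ are exactly the \emph{dense} leaves. The non-dense leaves used in Theorem~\ref{theorem:lamination} come from a completely different orbit, the one analyzed in Example~\ref{example: non dense full orbit} starting at $x=(1+\alpha)/2$; those correspond to non-singular leaves and play no role in the clique construction.

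Second, and more fundamentally, you propose to obtain the rays by truncating boundary leaves of $\Lambda$ at $p$, claiming these leaves ``exit $X$ through the isolated puncture $p$.'' They do not. No leaf of $\Lambda$ is asymptotic to $p$: the complementary region of $\Lambda$ containing $p$ is (by Lemma~\ref{lemma: shape of complementary component}) a once-punctured ideal polygon whose sides are the $L_i$, and the $L_i$ approach ideal vertices on $\partial\widetilde\Sigma$, not the cusp. The $2$-filling rays $R_i$ in the paper's proof are \emph{not} leaves of $\Lambda$; they are extra geodesic rays inside this polygonal complementary region, each based at $p$ and running out to an ideal vertex shared by two consecutive $L_i$'s. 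Pairwise disjointness of the $R_i$ then comes from the fact that they are disjoint sides-to-vertices rays inside the ideal polygon, not from ``leaves of a lamination are disjoint.'' Your step (i) also leans on the wrong object: it is the density of $L_i$ itself, not of some auxiliary dense leaf $\lambda$, that makes $R_i$ accumulate onto all of $\Lambda$ and hence intersect every simple ray not in $\{R_j\}$. Finally, your step (ii) is much more complicated than needed and does not match the definition of high-filling: the paper simply observes that each $R_i$ is disjoint from the explicit non-filling ray $R_\infty$, so $R_i$ cannot be high-filling, which gives $2$-filling immediately from the trichotomy. As written, your argument has a genuine gap at the point where you identify the clique rays as truncated leaves; the repair requires introducing the rays $R_i$ as separate objects inside the complementary polygon and using Lemma~\ref{lemma: shape of complementary component}.
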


It is an open problem to describe the boundaries of the loop graphs $L(S;p)$ as spaces of geodesic 
laminations. The authors believe that solving this problem would lead to
significantly better understanding of the graphs $L(S;p)$. The existence of exotic
laminations and rays as constructed in Theorem \ref{theorem:lamination} and
Theorem \ref{theorem:infiniteclique} and in \cite{2Fill} point to the difficulty
of solving this problem and to the complexity of the graphs $L(S;p)$. It would be
interesting to use skew products to construct other interesting laminations and
mapping classes of infinite type surfaces.

\subsection*{Acknowledgments}
We owe a debt of gratitude to Jon Chaika for teaching us and suggesting the method 
used in this paper for studying irrational rotations. We also thank the anonymous
referee for their careful reading of the paper and suggestions. The second author
was partially funded by NSF grant DMS-2202986.

\section{Proof of Theorem \ref{theorem:skewprod}}

We choose $\alpha=[0;a_1,a_2,\ldots]$ satisfying the conditions of Theorem
\ref{theorem:skewprod}; i.e. $a_1\geq 5$ is odd and $a_i\geq 6$ is even for every
$i\geq 2$. Furthermore we set $\alpha_1=\alpha$ and for $i\geq 2$,
\begin{equation}\label{eqn: alpha_i}
	\alpha_i = [0;a_i-1, a_{i+1}, a_{i+2}, \ldots].
\end{equation}
Let
\[
G(x) = \frac{1}{x} - \left\lfloor \frac{1}{x}\right\rfloor
\]
be the Gauss
transformation. Then $\alpha_{i+1} = G(\alpha_i) / (1-G(\alpha_i))$.

Our method of proof considers first return maps to certain subintervals, which shares some similarity with the renormalization procedure used in related work; see \cite{1/2-heavy} for instance, which also gives insights about the behavior of other orbits.

We will compute a sequence of nested intervals
$[0,1)=I_1 \supset I_2 \supset I_3 \supset \ldots$ each centered at $1/2$ and the first return maps to $I_i$. Let
\[
	k_i:I_i \to \N, \ \ \ k_i(x) = \inf\{k > 0 : t^kx \in I_i\}
\] be the first return time to $I_i$ and
\[
	\overline{t}_i:I_i \to I_i, \ \ \ \overline{t}_i(x) = t^{k_i(x)}(x)
\] be the first return map.
Our construction guarantees the following properties, which we will verify later.
\begin{enumerate}
	\item $\overline{t}_i$ is rotation by $(-1)^{i+1}\alpha_i$ (rescaled by the length of $I_i$).
	\item Moreover, we compute the induced Birkhoff sums
	      \[
		      \overline{f}_i:I_i \to \Z, \ \ \ \overline{f}_i(x) = \sum_{j=0}^{k_i(x) - 1} f(t^jx);
	      \] i.e. $\overline{f}_i$ records the Birkhoff sum accumulated before a point in $I_i$ returns
	      to $I_i$ under iteration of $t$. Then by our construction $\overline{f}_i$ will be equal to $+1$ on the
	      sub-interval of points to the left of $1/2$ and $-1$ on the sub-interval of points to the
	      right of $1/2$.
\end{enumerate}

Theorem \ref{theorem:skewprod} is a consequence of the following, seemingly weaker proposition.

\begin{proposition}\label{prop:firstreturns}
	There is a sequence of intervals $[0,1)=I_1\supset I_2\supset I_3\supset \ldots$ such
	that:
	\begin{enumerate}
		\item $I_i$ contains $1/2$ for each $i$ and is symmetric about $1/2$ for each $i$;\label{item: symmetric}
		\item for each $i\geq 1$ the interval $I_{i+1}$ has length
		      $|I_{i+1}|\le \alpha_{i}|I_{i}|$;\label{item: length bound}
		\item for each $i\geq 2$, after rescaling $I_i$ by $1/|I_i|$,
		      the function $\overline{f}_i(x)$ is equal to $\chi_{[0,1/2)} - \chi_{[1/2,1)}$;\label{item: new f}
		\item for any $i$, and for any $m\in \Z$, there exists an orbit point
		      $t^k(1/2) \in I_i$, for some $k\in \Z_+$,
		      with $S_k(1/2)=m$.\label{item: near 1/2}
	\end{enumerate}
\end{proposition}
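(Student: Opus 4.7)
The plan is to construct the intervals $I_i$ by induction on $i$, strengthening the inductive hypothesis to require additionally that, after rescaling $I_i$ to $[0,1)$, the first return map $\overline{t}_i$ is rotation by $(-1)^{i+1}\alpha_i$. The base case $I_1 = [0,1)$ is immediate, with $\overline{t}_1 = t$ rotation by $\alpha_1 = \alpha$ and $\overline{f}_1 = f$ already having the form in item (3). For the inductive step, writing $\alpha_i = [0;b,c,\ldots]$ with $b = a_1$ if $i = 1$ and $b = a_i - 1$ if $i \geq 2$, the hypotheses on the $a_j$ give $b \geq 5$ odd and $c \geq 6$ even. I would take $I_{i+1}$ to be the symmetric interval centered at $1/2$ of length $(1 - G(\alpha_i))\alpha_i \cdot |I_i|$, which ensures items (1) and (2).

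To verify the strengthened hypothesis (and hence item (3)) at level $i+1$, I would analyze the first return map of $\overline{t}_i$ to $I_{i+1}$. Direct computation using the identity $b\alpha_i \equiv -G(\alpha_i)\alpha_i \pmod{1}$ partitions $I_{i+1}$ into two pieces with first return times $b$ and $2b + 1$, whose shifts differ by $|I_{i+1}|$, so the resulting two-interval exchange is in fact a rotation. Its rotation number evaluates to $(-1)^{i+2}\alpha_{i+1}$ via the identity $\alpha_{i+1} = G(\alpha_i)/(1 - G(\alpha_i))$. For item (3), I would count the iterates of each $x \in I_{i+1}$ landing in the left versus right halves of $I_i$ during the first return: since $\alpha_i \approx 1/b$, the $b$ iterates of $x$ under $\overline{t}_i$ are approximately equispaced on $[0,1)$, and the oddness of $b = 2m + 1$ forces exactly $m + 1$ of them to lie on one side of $1/2$ and $m$ on the other, with the surplus on the side containing $x$. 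The Birkhoff sum of $\overline{f}_i$ thus evaluates to $+1$ if $x$ lies left of $1/2$ and $-1$ if right. The piece with return time $2b + 1$ yields the same answer via a slightly longer count, splitting the $2b+1$ iterates into three blocks.

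For item (4), observe that the $\overline{t}_i$-orbit of $1/2$ picks out exactly the times when $t^k(1/2) \in I_i$, and by item (3) the corresponding Birkhoff sums $S_k(1/2)$ form a walk on $\Z$ with $\pm 1$ increments. Since $I_j \subset I_i$ for all $j \geq i$, it suffices to show, at some level $j \geq i$, that this walk is unbounded both above and below; combined with the $\pm 1$ step structure, every integer $m$ is then hit. Unboundedness for the cocycle generated by $\overline{f}_j$ over the irrational rotation $\overline{t}_j$ should follow from standard results (e.g., Kesten's theorem for the characteristic function of an arc of length $1/2$, using $1/2 \notin \Z\alpha_j + \Z$), combined with the mean-zero property of $\overline{f}_j$ to get both directions. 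The main obstacle is the first-return computation in the previous paragraph: one must track the orientation reversal in the renormalization (the alternating sign $(-1)^{i+1}$) and verify the precise combinatorics of the two-interval exchange, and the parity hypotheses on the $a_j$ are what make the Birkhoff sums come out to exactly $\pm 1$ rather than $0$.
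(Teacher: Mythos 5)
The renormalization scheme you propose for items (1)--(3) is essentially the paper's: induce on a nested sequence of symmetric intervals about $1/2$, show the first return map is rotation by $(-1)^{i+1}\alpha_i$, and use the parity of the first partial quotient to force the induced cocycle $\overline f_{i+1}$ to take values $\pm1$. The ``approximately equispaced'' counting is hand-wavy as stated but can be made precise along the lines of Lemmas~\ref{lemma: inductive step, beta>0} and~\ref{lemma: inductive step, beta<0} (the Rauzy--Veech-type bookkeeping is exactly what those lemmas do). So items (1)--(3) are fine in spirit.

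Item~(\ref{item: near 1/2}), however, has a genuine gap. You appeal to Kesten's theorem to get unboundedness of the cocycle and then conclude that the $\pm1$ walk $n\mapsto\sum_{j<n}\overline f_i(\overline t_i^{\,j}(1/2))$ is unbounded in both directions. But Kesten-type non-coboundary results give that the Birkhoff sums are not \emph{uniformly} bounded (equivalently: unbounded for \emph{some} initial point), not that they are unbounded along the \emph{specific} orbit of $1/2$. The distinction is not academic here: the paper's own discussion after Corollary~\ref{cor: main} notes that if instead $a_i$ is even for all odd $i$ then $S_n(1/2)<0$ for all $n\geq1$ (citing~\cite{Heavy}), so the Birkhoff sums along the orbit of $1/2$ \emph{are} one-sidedly bounded for a nearby class of $\alpha$'s even though the cocycle is still non-coboundary; and Example~\ref{example: non dense full orbit} exhibits, for the very $\alpha$'s of the theorem, other orbits with forward Birkhoff sums bounded above. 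So unboundedness of the walk at $1/2$ cannot be extracted from soft cocycle theory and ``mean zero.'' The paper instead tracks explicitly, through each renormalization step, the running maximum and minimum of the partial-sum sequences $\mathcal S_\pm^i$ (Lemmas~\ref{lemma: bounds for beta>0} and~\ref{lemma: bounds for beta<0}); the specific parity pattern --- odd first coefficient, then all even --- is what forces the sign of $\beta$ to alternate at each stage, which in turn makes those estimates push $\min\mathcal S_-^i\to-\infty$ and $\max\mathcal S_-^i\to+\infty$. This combinatorial tracking is the real content of item~(\ref{item: near 1/2}) and cannot be replaced by Kesten's theorem.
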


\begin{proof}[Proof of Theorem \ref{theorem:skewprod} assuming Proposition
		\ref{prop:firstreturns}]

	First we improve the last bullet point to the following claim: for any $m\in \Z$
	there exist orbit points $t^k(1/2)$ in $I_i$ \emph{to the right of} $1/2$ with $S_k(1/2)=m$
	and similarly there exist points $t^k(1/2)$ \emph{to the left of} $1/2$ with $S_k(1/2)=m$, $k\in\Z_+$.
	We focus on the case of finding points to the right of $1/2$, as the other case is analogous.

	Choose $i$ odd, so that the first return to $I_i$ is rotation by
	$\alpha_i=[0;a_{i}-1,a_{i+1},a_{i+2},\ldots]$. For any $m\in \Z$, there is some $k\in\Z_+$ such that
	$t^k(1/2)\in I_{i+1}$ with $S_k(1/2)=m$. If $t^k(1/2)$ lies to the right of $1/2$
	then there is nothing to show. Otherwise, since $a_i-1 \geq 5$, the length of
	$I_{i+1}$ is at most $\alpha_{i}|I_i|$, and $t^k(1/2)$ lies in $I_{i+1}$, we have that
	$\overline{t}_i(t^k(1/2)) , \overline{t}_i^2(t^k(1/2)) \in I_i$ both lie to the right of $1/2$. 
	Now we compute the Birkhoff sum at $\overline{t}_i^2(t^k(1/2))$.
	Let
	\[
		l=k_i(t^k(1/2)) + k_i(\overline{t}_i(t^k(1/2)))
	\]
	be the second return time of $t^k(1/2)$ to $I_i$. Then by (\ref{item: new f})
	we have
	\[
		S_{k+l}(1/2) = S_k(1/2) + \overline{f}_i(t^k(1/2)) +
		\overline{f}_i(\overline{t}_i(t^k(1/2))) = S_k(1/2)+1+(-1)= S_k(1/2) = m.
	\]
	That is, the point $t^{k+l}(1/2)=\overline{t}_i^2(t^k(1/2)) \in I_i$ justifies the claim.

	Now the theorem follows from this claim.
	By Proposition \ref{prop:firstreturns}, the closure of the
	orbit of $(1/2,0)$ contains $\{1/2\}\times \Z$.
	By the claim, for any $m\in \Z$, we may choose points $t^k(1/2)$ arbitrarily close to $1/2$ and \emph{to the right} with $S_k(1/2)=m$.
	Consider any $\epsilon\in(0,1/2)$ and a
	point $(x,m)\in S^1 \times \Z$. We want to show that for any $s\in \Z$, the orbit
	of $(1/2,s)$ contains a point in $[x,x+\epsilon)\times \{m\}$. Since $\alpha$ is
	irrational, the rotation $t$ is minimal and there exists $k\geq 0$ with
	$t^k (1/2) \in \left[x,x+\frac{\epsilon}{2}\right)$. Suppose that $S_k(1/2)=N$.
	The functions $\{f\circ t^i\}_{i=0}^k$ are individually constant on a short interval
	that has $1/2$ as its left endpoint, so there is $0<\delta<\epsilon/2$
	such that any point $y \in [1/2,1/2 + \delta)$ satisfies $S_k(y)=S_k(1/2)=N$.
	By the claim, we can choose $l \geq 0$ such that
	\[
	T^l(1/2,s) = (t^l(1/2), S_l(1/2))\in \left[\frac{1}{2},\frac{1}{2}+\delta\right)
	\times \{m-s-N\}.
	\]
	Then
	\[
	T^{l+k}(1/2,s) = \big(t^{l+k}(1/2), s+S_l(1/2) + S_k(t^l(1/2))\big).
	\]
	As $t^l(1/2)\in [1/2,1/2+\delta)$ and $t^k(1/2)\in[x,x+\epsilon/2)$,
	we have
	\[
	t^{l+k}(1/2)=t^k(t^{l}(1/2))\in [t^k(1/2), t^k(1/2)+\delta)\subset [x,x+\epsilon).
	\]
	In addition, $S_k(t^l(1/2))=N$ by our choice of $\delta$.
	It follows that $s+S_l(1/2) + S_k(t^l(1/2))=s+(m-s-N)+N=m$ and
	$T^{l+k}(1/2,s)\in [x,x+\epsilon)\times \{m\}$, as desired.
\end{proof}

Now we deduce Corollary~\ref{cor: main} from Theorem~\ref{theorem:skewprod}.
\begin{proof}[Proof of Corollary~\ref{cor: main}]
	Theorem~\ref{theorem:skewprod} is equivalent to the following major case of Corollary~\ref{cor: main}: For any $m\in\Z$, the partial orbit $\{t^n(1/2)\}_{n\in \Sigma(1/2,m)}$ is dense in $S^1$. For the general case, for an arbitrary $k\in \Z$, we are interested in the density of the orbit points $t^{n+k}(1/2)$ with $n\in \Sigma(1/2,m)$, i.e. the image of the partial orbit $\{t^n(1/2)\}_{n\in \Sigma(1/2,m)}$ under the rotation $t^k$. Such a partial orbit is also dense in $S^1$.
\end{proof}

It remains to find the intervals $I_i$ and prove Proposition \ref{prop:firstreturns}. For this we proceed by induction.
To construct $I_{i+1}$ based on $J=I_i$ and its first return map, the inductive step fits into the following setup:
\begin{assumption}\label{assump: basic setup for J}
	\leavevmode
	\begin{itemize}
		\item We have chosen an interval $J\subset [0,1)$ which contains $1/2$ and is centered at $1/2$.
		\item After scaling $J$ by $1/|J|$ to unit length, the first return map to $J$, which we denote by $t_J$, is a rotation by a number $\beta=\pm[0;b,\ldots]$ with $b\geq 5$ odd (so $|\beta|<1/5$).
	\end{itemize}
\end{assumption}
We construct a sub-interval $J^\new$ of $J$ that is centered at $1/2$ with well-understood first return map among other properties.
We describe the construction below in Lemmas \ref{lemma: inductive step, beta>0} and \ref{lemma: inductive step, beta<0},
depending on the sign of $\beta$.

In the discussion below, we frequently look at different left-closed and right-open sub-intervals of $[0,1)$ centered at $1/2$ and rescale them to length $1$.
To avoid confusion due to different scales, we use the following convention.
\begin{convention}
	For a sub-interval $J$ of $[0,1)$ centered at $1/2$, we abuse notation and
	let $J:[0,1)\to J$ be the unique affine homeomorphism fixing $1/2$.
	Then for any $x\in(0,1)$, $J(x)$ is the point at distance $x$ from the left
	endpoint of $J$ after rescaling $J$ to unit length. Similarly, $J[a,b)$ is the
	sub-interval of $J$ corresponding to the interval $[a,b)\subset [0,1)$
	after rescaling $J$ to unit length.
\end{convention}

\bigskip

\noindent \underline{First case: $\beta > 0$}

\bigskip

We first consider the case $\beta>0$ and introduce some notation in order to state the inductive construction in Lemma \ref{lemma: inductive step, beta>0}.
Note that the first coefficient $b=\lfloor 1/\beta \rfloor$. We partition
$J$ into sub-intervals
\[
	J_0 = J[0,\beta), \ J_1 = J[\beta, 2\beta), \ \ldots,
	\ J_{b-1} = J[(b - 1)\beta, b\beta), \ J_{b} = J[b\beta, 1),
\]
each of which has length $\beta|J|$ except for $J_{b}$, which has length $\beta G(\beta)|J|$
where $G(x) = 1/x - \lfloor 1/x \rfloor$ as before.
For a point $x\in J$, we have $t_J(x)=t^{k(x)}(x)$ where $k(x) = \inf \{k > 0 : t^kx \in J\}$, and by our induction hypothesis, $t_J(J(x))=J(x+\beta\mod 1)$ and $t_J(J_i)=J_{i+1}$ for all $0\le i<b-1$. We consider the orbit of $x\in J$ under $t$ before its first return to $J$, and record the sequence of values of $f$ along this orbit, namely
\[
	\mathcal F(x) = \{ f(x), f(t(x)), \ldots, f(t^{k(x)-1}(x))\}.
\]
This is equivalent to recording the sequence of partial sums $\mathcal{S}(x)=\{S_i(x)\}_{i=1}^{k(x)}$ with $S_i(x)\defeq\sum_{j=0}^{i-1} f(t^j(x))$. The partial sums keep track of the increment in the second coordinate (compared to $(x,m)$) along the orbit of $(x,m)$ under $T$ in the skew product:
\[
	\left\{(x,m),\ T(x,m)=\left(tx, m+fx\right), \ \ldots, \
	T^{k(x)}(x,m)=\left(t^{k(x)}x, \ m+S_{k(x)}(x)\right)\right\}
\]
Finally, we set $\Sigma(x) = S_{k(x)}(x)$, which is the total sum of the sequence $\mathcal{F}(x)$.
We use $\mathcal{F}_1\cdot\mathcal{F}_2$ to denote the concatenation of two sequences $\mathcal{F}_1$ and $\mathcal{F}_2$.

Here are our remaining assumptions for the case $\beta>0$ in addition to Assumption \ref{assump: basic setup for J}:
\begin{assumption}\label{assump: beta>0}
	There are sequences $\mathcal F_+$, $\mathcal F_-$, and $\mathcal F_0$ with total sums $1$, $-1$, and $0$, respectively,
	such that
	\begin{itemize}
		\item Whenever $x\in J[0,1/2)$, we have $\mathcal F(x) = \mathcal F_+$, 
		\item Whenever $x\in J[1/2, 1 - \beta)$,
		      we have $\mathcal F(x) = \mathcal F_-$, 
		\item Whenever $x\in J[1-\beta,1)$,
		      we have $\mathcal F(x) = \mathcal F_- \cdot \mathcal F_0$.
	\end{itemize}
	Here we allow $\mathcal F_0$ to be an empty sequence.
\end{assumption}

As a consequence of the assumptions above, the sequence $\mathcal{S}(x)$ must be the sequence of partial sums for $\mathcal F_+$, $\mathcal F_-$, or $\mathcal F_+\cdot \mathcal{F}_0$ depending on the location of $x$ as above. Denote the partial sum sequences of $\mathcal F_+$ and $\mathcal F_-$ by $\mathcal S_+$ and $\mathcal{S}_-$, and denote the total sums of $\mathcal F_+$, $\mathcal F_-$, $\mathcal{F}_0$ as $\Sigma_+, \Sigma_-, \Sigma_0$. The assumptions above imply $\Sigma_+=1$, $\Sigma_-=-1$, and $\Sigma_0=0$.

We record the maximum and minimum over each sequence of partial sums, i.e.
\[
	m_+ \defeq \min\mathcal S_+, \ M_+ \defeq \max \mathcal S_+,
	\ m_- \defeq \min \mathcal S_-, \ M_- \defeq \max \mathcal S_-
\]

Our aim is to find a sub-interval
$J^\new \subset J$ containing and centered at $1/2$, for which the first return
to $J^\new$, rescaled by $1/|J^\new|$, is a rotation by a new number
$\beta^\new = -[0;c,\ldots]$ with $c\geq 3$ determined by $\beta$ explicitly as in Lemma \ref{lemma: inductive step, beta>0} below.
Moreover, for $x\in J^\new$, denote the first return time to $J^\new$ as
\[
	k^\new(x)=\inf \{k > 0 : t^k(x) \in J^\new\}
\]
and consider as before the sequence of $f$-values
\[
	\mathcal F^\new(x) \defeq \{f(x), f(t(x)),\ldots,f(t^{k^\new(x)-1}(x))\}.
\]
Let $\mathcal S^\new(x)$ be the sequence of partial sums associated to $\mathcal F^\new(x)$,
and let $\Sigma^\new(x) = S_{k^\new(x)}(x)$ be the total sum.

The following lemma shows how we construct the sub-interval $J^\new$ and the nice properties guaranteed by the construction.

\begin{lemma}\label{lemma: inductive step, beta>0}
	Suppose there is a sub-interval $J\subset [0,1)$ with first return map $t_J$ satisfying Assumptions \ref{assump: basic setup for J}
	and \ref{assump: beta>0}
	with $\beta=[0;b,c+1,\ldots]>0$, where $c\ge 3$.
	Denote $b = 2n+1$ with $n\geq 1$.
	Then the sub-interval $J^\new\subset J$ given by
	\[
		J^\new \defeq J\left[\frac{1}{2} - \frac{1}{2}\beta(1-G(\beta)), \frac{1}{2} + \frac{1}{2}\beta(1-G(\beta))\right).
	\]
	has the following properties:
	\begin{enumerate}
		\item $J^\new $ is symmetric about $1/2$ of length $\beta (1-G(\beta))|J|$.\label{item: J^new prop, sym and len}
		\item $J^\new$ is a sub-interval of $J_n$, the right endpoints of $J^\new$ and $J_n$ are the same,
		      and the left endpoint of $J^\new$ has distance $\beta G(\beta)|J|=|J_b|$ from the left endpoint of $J_n$.\label{item: J^new prop, rel J_n}
		\item The image of $J_n\setminus J^\new$ under $n+1$ iterations of $t_J$ is $J_b$.\label{item: J^new prop, orbit}
		\item Re-scaling by $1/|J^\new|$, the first return map to $J^\new$ is rotation
		      by \label{item: J^new prop, 1st return}
		      \[
			      \beta^\new \defeq -G(\beta) / (1-G(\beta))=-[0;c,\ldots].
		      \]
		\item There are sequences
		      \[
			      \mathcal F_+^\new\defeq \mathcal F_+ \cdot \mathcal F_-^n \cdot
			      \mathcal F_0 \cdot \mathcal F_+^n,\quad
			      \mathcal F_-^\new \defeq \mathcal F_-^{n+1} \cdot \mathcal F_0
			      \cdot \mathcal F_+^n,\quad\text{ and }\quad
			      \mathcal F_0^\new \defeq \mathcal F_+ \cdot \mathcal F_-^{n+1}\cdot \mathcal F_0
			      \cdot \mathcal F_+^n
		      \]
		      satisfying:
		      \begin{itemize}
			      \item whenever $x\in J^\new[0,\beta^\new)$ we have $\mathcal F^\new(x) =
				            \mathcal F_+^\new \mathcal F_0^\new$,
			      \item whenever $x\in J^\new[\beta^\new, 1/2)$ we have $\mathcal F^\new(x) =
				            \mathcal F_+^\new$,
			      \item whenever $x\in J^\new[1/2, 1)$ we have $\mathcal F^\new(x) = \mathcal F_-^\new$.
		      \end{itemize}
		      Moreover, $\mathcal F_+^\new, \mathcal F_-^\new, \mathcal F_0^\new$
		      have total sums $1,-1,0$ respectively. \label{item: J^new prop, const sequences and expression}
	\end{enumerate}
\end{lemma}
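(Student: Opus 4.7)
The plan is to verify each of the five items using the arithmetic identity $b\beta+\beta G(\beta)=1$, which follows from $b=\lfloor 1/\beta\rfloor$. Items (\ref{item: J^new prop, sym and len}) and (\ref{item: J^new prop, rel J_n}) should fall out of direct inspection: the right endpoint of $J^{\new}$ simplifies to $(n+1)\beta$ (coinciding with the right endpoint of $J_n$), the length to $\beta(1-G(\beta))$, and the leftover $J_n\setminus J^{\new}=J[n\beta,n\beta+\beta G(\beta))$ has length $\beta G(\beta)=|J_b|$. For item (\ref{item: J^new prop, orbit}), since $t_J$ acts on $J$ as rotation by $\beta$, applying $t_J^{n+1}$ translates $J[n\beta,n\beta+\beta G(\beta))$ to $J[(2n+1)\beta,(2n+1)\beta+\beta G(\beta))=J[b\beta,1)=J_b$; the intermediate iterates stay in $[0,1)$ because $(n+k)\beta+\beta G(\beta)\le 1$ for $k\le n+1$.

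For item (\ref{item: J^new prop, 1st return}), I would exploit $b\beta\equiv -\beta G(\beta)\pmod 1$, so $t_J^b$ is rotation by $-\beta G(\beta)$ on $J$. Writing $x=J^{\new}(y)$ in rescaled coordinates, direct computation should give $t_J^b(x)\in J^{\new}$ iff $y\ge -\beta^{\new}=G(\beta)/(1-G(\beta))$, so the ``right portion'' $y\in[-\beta^{\new},1)$ first returns at time $b$ and, after rescaling $J^{\new}$ to unit length, corresponds to rotation by $\beta^{\new}$. For the ``left portion'' $y\in[0,-\beta^{\new})$, the image after $b$ steps lies in $J_n\setminus J^{\new}$; item (\ref{item: J^new prop, orbit}) carries it to $J_b$ in $n+1$ more steps; and another $n+1$ applications of $t_J$ shift $J_b$ across to the rightmost $\beta G(\beta)$-strip of $J_n$, which lies inside $J^{\new}$ thanks to $c\ge 3$ (forcing $G(\beta)\le 1/(c+1)\le 1/4<1/2$). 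Hence the first return time on the left portion is $2b+1$, and the two pieces should combine into a single rotation by $\beta^{\new}$.

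For item (\ref{item: J^new prop, const sequences and expression}) I would use $\mathcal F^{\new}(x)=\mathcal F(x)\cdot \mathcal F(t_J x)\cdots \mathcal F(t_J^{k-1}x)$ where $k$ is the first return time to $J^{\new}$ under $t_J$, reading off each $\mathcal F(t_J^jx)$ from the partition of $J$ in Assumption~\ref{assump: beta>0}. For $y\in[1/2,1)$ (with $k=b$), tracking the orbit should show it visits $J[1/2,1-\beta)$ for $j=0,\dots,n-1$, then $J[1-\beta,1)$ at $j=n$, then $J[0,1/2)$ for $j=n+1,\dots,2n$, giving $\mathcal F_-^n\cdot(\mathcal F_-\cdot\mathcal F_0)\cdot\mathcal F_+^n=\mathcal F_-^{\new}$. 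For $y\in[-\beta^{\new},1/2)$ the initial point sits in $J[0,1/2)$ rather than $J[1/2,1-\beta)$, yielding $\mathcal F_+\cdot \mathcal F_-^{n-1}\cdot(\mathcal F_-\cdot\mathcal F_0)\cdot\mathcal F_+^n=\mathcal F_+^{\new}$. For $y\in[0,-\beta^{\new})$ (with $k=2b+1$), the orbit executes two similar cycles linked by the visit $J_n\setminus J^{\new}\to\cdots\to J_b$, and the concatenation should rearrange to $\mathcal F_+^{\new}\cdot \mathcal F_0^{\new}$. The asserted total sums $1,-1,0$ then follow by counting multiplicities of $\mathcal F_+,\mathcal F_-,\mathcal F_0$.

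The hard part will be the bookkeeping in item (\ref{item: J^new prop, const sequences and expression}): the left-portion trace has $4n+3$ terms, each needing correct classification among the three $J$-subintervals, and the condition $c\ge 3$ (equivalently $G(\beta)\le 1/4$) must be invoked at several boundary checks --- in particular, to keep the left portion of $J^{\new}$ entirely in $J[0,1/2)$ and to ensure the return target $[(n+1)\beta-\beta G(\beta),(n+1)\beta)$ lies inside $J^{\new}$.
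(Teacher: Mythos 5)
Your proposal is correct and follows essentially the same route as the paper: direct endpoint computations for items (1)--(3), identification of the first return to $J_n$ (equivalently $t_J^b$) as a rotation by $-G(\beta)$ in $J_n$-coordinates followed by a second Rauzy--Veech step restricting to $J^\new$, and then tracking which $J_k$ each iterate of $J^\new$ lands in (right-aligned for $0\le k\le n$, left-aligned thereafter) to read off the $\mathcal F$-sequences. The bookkeeping you outline — $b$ steps for the right portion, $b+(n+1)+(n+1)=2b+1$ for the left portion, with the $J_n\setminus J^\new\to J_b$ detour producing the extra $\mathcal F_-$ and the visit to $J_b$ producing the $\mathcal F_0$ — matches the paper's argument, as does the use of $G(\beta)<1/2$ (from $c\ge3$) to guarantee the returned strip lands inside $J^\new$.
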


\begin{proof}
	Item (\ref{item: J^new prop, sym and len}) is immediate.
	To see item (\ref{item: J^new prop, rel J_n}), note that $1/2$ lies in the interval $J_n = J[n\beta, (n+1)\beta)$ and its distances to the endpoints are
	\[
		\left(\frac12 - n\beta\right)|J| = \frac12(1-2n\beta)|J| = \frac12 \beta|J|(1 + G(\beta)) \text{ and } \left((n+1)\beta-\frac12\right)|J|=\frac12 \beta|J|(1 - G(\beta)).
	\]
	Since $t_J$ is rotation by $\beta|J|=|J_i|$ for $i<b$ and $t_J(J_i)=J_{i+1}$ for any $i<b-1$, item (\ref{item: J^new prop, orbit}) easily follows; see Figure \ref{figure: J-decomp, beta>0}.
	\begin{figure}
		\labellist
		\small \hair 2pt
		\pinlabel $\color{blue}\frac{1}{2}$ at 143 2
		\pinlabel $J_0$ at 30 -5
		\pinlabel $J_1$ at 85 -5
		\pinlabel $J_2$ at 135 -5
		\pinlabel $J_3$ at 195 -5
		\pinlabel $J_4$ at 250 -5
		\pinlabel $J_5$ at 282 -5
		\pinlabel $J_2\setminus J^\new=t_J^{-3}(J_5)$ at 150 40
		\pinlabel $\color{red}t_J^3(J^\new)$ at 25 30
		\pinlabel $\color{red}t_J^4(J^\new)$ at 80 30
		\pinlabel $\color{red}J^\new$ at 145 25
		\pinlabel $\color{red}t_J(J^\new)$ at 200 25
		\pinlabel $\color{red}t_J^2(J^\new)$ at 255 25
		\endlabellist
		\centering
		\includegraphics{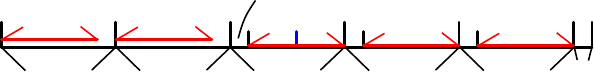}
		\caption{The decomposition of $J$ into $J_i$'s when $b=2n+1=5$ for $n=2$ and the orbit of $J^\new$ under iterations of $t_J$, for $\beta>0$}\label{figure: J-decomp, beta>0}
	\end{figure}

	Now we analyze the first return map. After scaling by $1/|J_n|=1/\beta$, the first return map to $J_n$ is rotation by $(b\beta-1)/\beta=-G(\beta)$. Therefore, by restricting to the further sub-interval $J^\new$ and
	rescaling by $1/|J^\new|$, one can check that the first return map to $J^\new$ is rotation by $\beta^\new =-G(\beta)/(1-G(\beta))$. This is essentially a simple case of Rauzy--Veech induction. See the next several paragraphs for more details.
 A direct computation verifies item (\ref{item: J^new prop, 1st return}), i.e.
	\[
		\beta^\new = -G(\beta) / (1-G(\beta))=-[0; c,\ldots].
	\]

	Next we compute the sequences $\mathcal F_+^\new, \mathcal F_-^\new, \mathcal F_0^\new$.
	Note that by item (\ref{item: J^new prop, rel J_n}) $t_J^k(J^\new)$ is a sub-interval of $J_{n+k}$ sharing its right endpoint for $0\le k\le n$ and $t_J^k(J^\new)$ is a sub-interval of $J_{k-(n+1)}$ sharing its left endpoint for $n+1\le k\le 3n+1$; see Figure \ref{figure: J-decomp, beta>0}.

	In particular, $t_J^{2n+1}(J^\new)$ lies in $J_n$ sharing its left endpoint, and we observe that this completes the first return to $J^\new$ by $t_J^{2n+1}$ for $x\in J^\new[\beta^\new, 1)$. Counting for which $0\le k\le 2n$ we have $t_J^{k}(x)$ on the left or right of $J(1/2)$,
	we observe that, for $x\in J^\new[\beta^\new,1/2)$, the sequence $\mathcal F^\new(x)$ is equal to $\mathcal F_+^\new$ defined as in (\ref{item: J^new prop, const sequences and expression}), i.e.
	\[
		\mathcal F_+^\new= \mathcal F_+ \cdot \mathcal F_-^n \cdot
		\mathcal F_0 \cdot \mathcal F_+^n;
	\]
	and for $x\in J^\new[1/2,1)$, the sequence $\mathcal F^\new(x)$ is equal to $\mathcal F_-^\new$ as in (\ref{item: J^new prop, const sequences and expression}), i.e.
	\[
		\mathcal F_-^\new = \mathcal F_-^{n+1} \cdot \mathcal F_0
		\cdot \mathcal F_+^n.
	\]

	On the other hand, any $x\in J^\new[0,\beta^\new)$ also returns to $J_n$ for the first time via $t_J^{2n+1}$ but lands in $J_n\setminus J^\new=t_J^{2n+1}(J^\new[0,\beta^\new))$. After another $2n+2$ 
	iterations of $t_J$, $x$ finally returns to $J^\new$ and the additional sequence of $f$-values is $\mathcal F_0^\new$ as in (\ref{item: J^new prop, const sequences and expression}), i.e.
	\[
		\mathcal F_0^\new =\mathcal F_+ \cdot \mathcal F_-^{n+1}\cdot \mathcal F_0
		\cdot \mathcal F_+^n.
	\]
	Indeed, after the first return to $J_n$ (i.e. $2n+1$ iterations of $t_J$), $x$ lands to the right of $1/2$ for the next $n+1$ iterations of $t_J$ in $J$ rather than $n$ times as before, since $t_J^{n+1}(J_n\setminus J^\new)=J_b$. Finally, the last $n+1$ iterations take such $x$ back to $J^\new$ and $x$ stays on the left of $1/2$ until it is back.

	Therefore, for $x\in J^\new[0,\beta^\new)$, we see that $\mathcal F^\new(x)$ is the concatenation $\mathcal F_+^\new \cdot \mathcal F_0^\new$ as claimed in item (\ref{item: J^new prop, const sequences and expression}). The computations above in these three cases together verify item (\ref{item: J^new prop, const sequences and expression}), where the total sums of the sequences $\mathcal F_+^\new, \mathcal F_-^\new, \mathcal F_0^\new$ are $1,-1, 0$, respectively, as an immediate corollary of the expressions in (\ref{item: J^new prop, const sequences and expression}) and the total sums of $\mathcal F_+, \mathcal F_-,
		\mathcal F_0$ given in Assumption \ref{assump: beta>0}.
\end{proof}

Now consider the partial sum sequences $\mathcal S_+^\new$ and $\mathcal S_-^\new$ for the sequences $\mathcal F_+^\new$ and $\mathcal F_-^\new$, respectively. We estimate the upper and lower bounds of these partial sum sequences:
\[
	M_+^\new \defeq \max \mathcal S_+^\new, \ m_+^\new \defeq
	\min \mathcal S_+^\new, \ M_-^\new \defeq \max \mathcal S_-^\new,
	\ m_-^\new \defeq \min \mathcal S_-^\new.
\]
\begin{lemma}\label{lemma: bounds for beta>0}
	For the sequences $\mathcal F_+^\new$, $\mathcal F_-^\new$ and the integer $n$ defined as in Lemma \ref{lemma: inductive step, beta>0}, assuming the total sums of $\mathcal F_+$, $\mathcal F_-$, and $\mathcal F_0$ to be $1, -1, 0$ respectively as in Assumption \ref{assump: beta>0}, and assuming $n\ge2$ (i.e. $b\ge5$), we have
	\begin{itemize}
		\item $M_+^\new \geq M_+$;
		\item $m_+^\new \leq m_- - (n-2)$;
		\item $M_-^\new \geq M_-$;
		\item $m_-^\new \leq m_- - n$;
	\end{itemize}
\end{lemma}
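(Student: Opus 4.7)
The plan is a direct block-by-block computation of the partial sums of the explicit concatenations
\[
\mathcal F_+^\new = \mathcal F_+ \cdot \mathcal F_-^n \cdot \mathcal F_0 \cdot \mathcal F_+^n, \qquad
\mathcal F_-^\new = \mathcal F_-^{n+1} \cdot \mathcal F_0 \cdot \mathcal F_+^n,
\]
using only additivity of partial sums and the known block totals $\Sigma_+ = 1$, $\Sigma_- = -1$, $\Sigma_0 = 0$. The key observation is that in any concatenation $\mathcal G_1 \cdots \mathcal G_\ell$, the partial sums inside the block $\mathcal G_j$ are obtained from the intrinsic partial sums of $\mathcal G_j$ by adding the fixed offset $\Sigma(\mathcal G_1) + \cdots + \Sigma(\mathcal G_{j-1})$. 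Hence the minimum and maximum inside each block are determined by the block's type and this single offset, and the global extrema $m_\pm^\new$, $M_\pm^\new$ are just the infimum and supremum over the block-wise values.

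First I would verify $M_+^\new \ge M_+$ and $M_-^\new \ge M_-$. Both $\mathcal F_+^\new$ and $\mathcal F_-^\new$ begin at cumulative sum $0$ with a block of the appropriate sign, so $\mathcal S_+^\new$ contains $\mathcal S_+$ as an initial segment, and likewise $\mathcal S_-^\new$ contains $\mathcal S_-$; the maxima $M_+$ and $M_-$ are therefore achieved immediately.

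For the lower bounds I would track the cumulative sum at the start of each copy of $\mathcal F_-$. In $\mathcal F_+^\new$, the sum at the start of the $k$-th copy of $\mathcal F_-$ equals $1 + (k-1)(-1) = 2 - k$ for $1 \le k \le n$, so inside that copy the partial sums dip at least to $2 - k + m_-$. Taking $k = n$ (which uses $n \ge 2$, ensuring that a copy of $\mathcal F_-$ with a sufficiently negative offset is actually present) yields $m_+^\new \le m_- - (n-2)$. Similarly, in $\mathcal F_-^\new$ the $k$-th copy of $\mathcal F_-$ starts at cumulative sum $-(k-1)$, so the $(n+1)$-st copy begins at $-n$ and dips at least to $m_- - n$, giving $m_-^\new \le m_- - n$.

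There is no genuine obstacle here; the entire argument is bookkeeping driven by the block totals $\pm 1$ and $0$. The only care needed is in indexing the running cumulative sums and in choosing the copy of $\mathcal F_-$ whose starting offset produces the sharpest bound. One should also note, in passing, that neither the $\mathcal F_0$ block nor the trailing $\mathcal F_+^n$ block can force the partial sum lower than the values already produced inside the chain of $\mathcal F_-$'s (since $\mathcal F_0$ has total $0$ and each $\mathcal F_+$ only raises the running sum by $1$ on net), so they may be safely ignored when extracting $m_\pm^\new$.
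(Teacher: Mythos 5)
Your proof is correct and follows essentially the same approach as the paper's: both arguments exploit that in a concatenation the partial sums within a block are shifted by the running total of the preceding blocks, establish the maxima via the prefix observation, and obtain the minima by selecting the copy of $\mathcal F_-$ with the most negative offset (the paper regroups as $(\mathcal F_+ \cdot \mathcal F_-^{n-1})\cdot \mathcal F_- \cdot(\cdots)$ and $(\mathcal F_-^n)\cdot\mathcal F_-\cdot(\cdots)$, which is the same computation you do by tracking the offset at the start of the $k$-th $\mathcal F_-$). One minor remark: your parenthetical that the choice $k=n$ ``uses $n\ge2$'' is not quite right---the inequality $m_+^\new\le m_--(n-2)$ would also hold for $n=1$---but this does not affect the validity of the argument under the stated hypothesis.
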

\begin{proof}
	These easily follow by inspection and the fact that $\Sigma_+ = +1, \Sigma_- = -1$.
	As $\mathcal F^\new _+$ starts with the sequence $\mathcal F_+$, we note that $\mathcal S_+$ is a prefix of the sequence $\mathcal S^\new_+$, which verifies the first bullet. The third bullet follows similarly.

	For the second bullet, consider the expression
	$$\mathcal F^\new_+=(\mathcal{F}_+ \cdot \mathcal{F}_-^{n-1})\cdot\mathcal F_- \cdot(\mathcal F_0 \cdot \mathcal F_+^n).$$
	The sequence $\mathcal F_+ \cdot \mathcal F_-^{n-1}$ has total sum $\Sigma_+ + (n-1)\Sigma_-=-(n-2)$, so for the subsequence $\mathcal F_-$ after these terms, its partial sum sequence $\mathcal{S}_-$ shifted by $-(n-2)$ appears as a subsequence of $\mathcal S^\new_-$, which implies the second bullet.

	The last bullet can be shown analogously, as the sequence $\mathcal F^\new_-$ starts with $(\mathcal F_-)^n\cdot \mathcal F_-$, where the part in parentheses has total sum $n\Sigma_-=-n$.
\end{proof}

\bigskip

\noindent \underline{Second case: $\beta < 0$}

\bigskip

We now consider the case $\beta < 0$. Denote $\gamma = -\beta$.
Then the first return map to $J$ is $t_J(J(x))=J(x-\gamma\mod 1)$.
The case here is essentially just mirroring the case above, as now we are rotating to the left. For clarity, we include some details below. Define the sequences $\mathcal F(x)$ and $\mathcal S(x)$ as before for any $x\in J$ and let $\Sigma(x)$ be the total sum of $\mathcal F(x)$. Here are the remaining assumptions for the case $\beta<0$.
\begin{assumption}\label{assump: beta<0}
	There are sequences $\mathcal F_+,\mathcal F_-,\mathcal F_0$ with total sums $\Sigma_+=1,\Sigma_-=-1,\Sigma_0=0$, respectively, such that
	\begin{itemize}
		\item For $x\in J[0,\gamma)$ we have $\mathcal F(x) = \mathcal F_+ \cdot \mathcal F_0$.
		\item For $x\in J[\gamma,1/2)$ we have $\mathcal F(x) = \mathcal F_+$.
		\item For $x\in J[1/2,1)$ we have $\mathcal F(x) = \mathcal F_-$.
	\end{itemize}
\end{assumption}

We denote the first coefficient by $b=\lfloor 1 / \gamma \rfloor$, and express it as $b=2n+1$ for some $n\geq 2$.
We again partition $J$ into intervals
\[
	J_0 = J[1-\gamma, 1), J_1 = J[1-2\gamma, 1-\gamma), \ldots, J_b = J[0, 1-b\gamma),
\]
and we have $t_J(J_i)=J_{i+1}$ for $0\le i < b-1$.
The interval $J_b$ has length $\gamma G(\gamma)$.

Our aim is again to find a sub-interval $J^\new \subset J$ containing $1/2$ and symmetric about $1/2$, for which the first return to $J^\new$ inherits nice properties regarding the sequence $\mathcal F^\new(x)$ of $f$-values and the sequence $\mathcal S^\new(x)$ of partial sums, defined just as in the previous case.

\begin{lemma}\label{lemma: inductive step, beta<0}
	Suppose there is a sub-interval $J\subset [0,1)$ with first return map $t_J$ satisfying Assumptions \ref{assump: basic setup for J}
	and \ref{assump: beta<0}
	with $\beta=-\gamma=-[0;b,c+1,\ldots]<0$, where $c\ge 3$.
	Denote $b = 2n+1$ with $n\geq 1$.
	Then with notation as above,
	the sub-interval $J^\new\subset J$ given by
	\[
		J^\new \defeq J\left[\frac{1}{2} - \frac{1}{2}\gamma(1-G(\gamma)), \frac{1}{2} + \frac{1}{2}\gamma(1-G(\gamma))\right).
	\]
	has the following properties:
	\begin{enumerate}
		\item $J^\new $ is symmetric about $1/2$ of length $\gamma(1-G(\gamma))|J|$.\label{item: J^new prop beta<0, sym and len}
		\item $J^\new$ is a sub-interval of $J_n$, the left endpoints of $J^\new$ and $J_n$ are the same, and the right endpoint of $J^\new$ has distance $\gamma G(\gamma)|J|=|J_b|$ from the right endpoint of $J_n$.\label{item: J^new prop beta<0, rel J_n}
		\item The image of $J_n\setminus J^\new$ under $n+1$ iterations of $t_J$ is $J_b$.\label{item: J^new prop beta<0, orbit}
		\item Re-scaling by $1/|J^\new|$, the first return map to $J^\new$ is rotation
		      by \label{item: J^new prop beta<0, 1st return}
		      \[
			      \beta^\new \defeq G(\gamma) / (1-G(\gamma))=[0;c,\ldots].
		      \]
		\item There are sequences
		\[
		\mathcal F_+^\new \defeq \mathcal F_+^{n+1} \cdot \mathcal F_0 \cdot
			      \mathcal F_-^n, \quad
			      \mathcal F_-^\new \defeq \mathcal F_- \cdot \mathcal F_+^n \cdot
			      \mathcal F_0 \cdot \mathcal F_-^n, \quad\text{and}\quad
			      \mathcal F_0^\new \defeq \mathcal F_- \cdot \mathcal F_+^{n+1} \cdot
			      \mathcal F_0 \cdot \mathcal F_-^n
			      \]
			      with total sums $1,-1, 0$, respectively, satisfying:
		      \begin{itemize}
			      \item whenever $x\in J^\new[0,1/2)$ we have $\mathcal F^\new(x) =
				            \mathcal F_+^\new$, 
			      \item whenever $x\in J^\new[1/2, 1-\beta^\new)$ we have $\mathcal F^\new(x) =
				            \mathcal F_-^\new$, 
			      \item whenever $x\in J^\new[1-\beta^\new, 1)$ we have $\mathcal F^\new(x) =
				            \mathcal F_-^\new \mathcal F_0^\new$.
		      \end{itemize}\label{item: J^new prop beta<0, const sequences}
	\end{enumerate}
\end{lemma}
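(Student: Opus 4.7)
The plan is to mirror the proof of Lemma~\ref{lemma: inductive step, beta>0}, with orientations reversed since $t_J$ is now a left-shift by $\gamma$ rather than a right-shift by $\beta$. The key symmetry is that $J^\new$ now shares its \emph{left} endpoint with $J_n$, $J_b$ sits at the \emph{left} end of $J$, and $\beta^\new$ turns out positive rather than negative; the two first-return pictures are mirror images. For items (1)--(3), the central identity is $(n+1)\gamma = \tfrac{1}{2}+\tfrac{L}{2}$, where $L \defeq \gamma(1-G(\gamma))$; this follows from $b\gamma = 1-\gamma G(\gamma)$. It immediately gives item (1) and the endpoint comparison in item (2), and for item (3) it shows that $J_n\setminus J^\new = [\tfrac{1}{2}+\tfrac{L}{2},1-n\gamma)$, shifted by $-(n+1)\gamma$ under $t_J^{n+1}$ with no wrap-around, lands precisely on $[0,\gamma G(\gamma)) = J_b$.

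For item (4), I would track the orbit of $J^\new$ under $t_J$. For $0 \le k \le n$, $t_J^k(J^\new)$ is the leftmost length-$L$ sub-interval of $J_{n+k}$, so $t_J^n(J^\new) = [\gamma G(\gamma),\gamma)\subset J_{b-1}$ sits just below $\gamma$. The next step wraps around, giving $t_J^{n+1}(J^\new) = [1-L,1)\subset J_0$, and $n$ further shifts yield $t_J^{2n+1}(J^\new) = J^\new + \gamma G(\gamma) \subset J_n$. This translate overlaps $J^\new$ in all but a length-$\gamma G(\gamma)$ sliver on either side, so the first-return time is $2n+1$ on the left piece $J^\new[0,1-\beta^\new)$ (with return $x\mapsto x+\gamma G(\gamma)$) and $4n+3$ on the right sliver $J^\new[1-\beta^\new,1)$: the extra $2n+2$ iterations carry $J_n\setminus J^\new$ to $J_b$ by item (3), then across to $J_0$, and finally through $J_1,\ldots,J_{n-1}$ back to the leftmost sliver of $J^\new$. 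After rescaling by $1/L$, the first return is rotation by $\beta^\new = G(\gamma)/(1-G(\gamma)) = [0;c,\ldots]>0$; both positivity and $\beta^\new<1$ use $G(\gamma)<1/2$, which holds because $c+1\ge 4$.

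For item (5), the strategy is to read off $\mathcal F^\new(x)$ by tracking which zone $J[0,\gamma)$, $J[\gamma,\tfrac{1}{2})$, or $J[\tfrac{1}{2},1)$ each iterate occupies. The intervals $J_{n+i}$ for $0<i<n$ lie entirely in $J[\gamma,\tfrac{1}{2})$ (chunk $\mathcal F_+$) and $J_0,\ldots,J_{n-1}$ lie in $J[\tfrac{1}{2},1)$ (chunk $\mathcal F_-$); the exceptional $J_{b-1}$ straddles $\gamma$, with left sub-interval in $J[0,\gamma)$ (chunk $\mathcal F_+\cdot\mathcal F_0$) and right sub-interval in $J[\gamma,\tfrac{1}{2})$ (chunk $\mathcal F_+$). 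For the left piece of $J^\new$, $x_n$ lands in the left sub-interval of $J_{b-1}$, so the middle chunk is $\mathcal F_+\cdot\mathcal F_0$; for the right sliver, the additional loop visits the right sub-interval of $J_{b-1}$ (chunk $\mathcal F_+$) and then $J_b\subset J[0,\gamma)$ (chunk $\mathcal F_+\cdot\mathcal F_0$). The initial chunk is $\mathcal F_+$ or $\mathcal F_-$ according as $x_0\in J^\new[0,\tfrac{1}{2})$ or $J^\new[\tfrac{1}{2},1)$, yielding the stated formulas for $\mathcal F_+^\new$, $\mathcal F_-^\new$, $\mathcal F_0^\new$; verifying the total sums $1,-1,0$ is mechanical.

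The main obstacle is the careful zone-tracking in item (5), especially the straddling of $J_{b-1}$, which is the source of the asymmetry among the three cases; once the orbit structure from item (4) is pinned down, the rest is routine bookkeeping.
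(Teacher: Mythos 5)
Your proposal is correct and follows essentially the same approach as the paper: mirror Lemma~\ref{lemma: inductive step, beta>0}, track the orbit of $J^\new$ under $t_J$ using the identity $(n+1)\gamma = \tfrac{1}{2}+\tfrac{L}{2}$, observe that $t_J^{2n+1}(J^\new)$ sits inside $J_n$ sharing its right endpoint (so the return map is a rotation by $+\gamma G(\gamma)$, i.e.\ $\beta^\new$ after rescaling), and read off $\mathcal F^\new(x)$ zone by zone with the exceptional straddling at $J_{b-1}$. The paper's own proof leaves most of this implicit by deferring to the $\beta>0$ case, whereas you spell out the bookkeeping; the content is the same.
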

\begin{proof}
	The proof is almost the same as that of Lemma \ref{lemma: inductive step, beta>0}, by symmetry; see Figure \ref{figure: J-decomp, beta<0}.
	So we just summarize a few key points below.
	\begin{figure}
		\labellist
		\small \hair 2pt
		\pinlabel $\color{blue}\frac{1}{2}$ at 143 2
		\pinlabel $J_0$ at 260 -5
		\pinlabel $J_1$ at 203 -5
		\pinlabel $J_2$ at 150 -5
		\pinlabel $J_3$ at 93 -5
		\pinlabel $J_4$ at 40 -5
		\pinlabel $J_5$ at 6 -5
		\pinlabel $J_2\setminus J^\new=t_J^{-3}(J_5)$ at 190 40
		\pinlabel $\color{red}t_J^3(J^\new)$ at 265 30
		\pinlabel $\color{red}t_J^4(J^\new)$ at 210 30
		\pinlabel $\color{red}J^\new$ at 145 25
		\pinlabel $\color{red}t_J(J^\new)$ at 90 25
		\pinlabel $\color{red}t_J^2(J^\new)$ at 35 25
		\endlabellist
		\centering
		\includegraphics{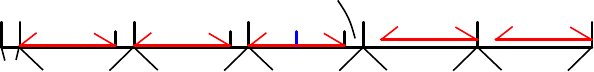}
		\caption{The decomposition of $J$ into $J_i$'s when $b=2n+1=5$ for $n=2$ and the orbit of $J^\new$ under iterations of $t_J$, for $\beta<0$.}\label{figure: J-decomp, beta<0}
	\end{figure}
	Items (\ref{item: J^new prop beta<0, sym and len})--(\ref{item: J^new prop beta<0, orbit}) are just direct computations as before, noting that $1/2$ lies in the interval $J_n$ but $|J_b|$-closer to its \emph{left} endpoint this time.

	As in the previous case, after scaling by $1/|J_n|=1/\gamma$, the first return map to $J_n$ is rotation by $(1-b\gamma)/\gamma=G(\gamma)$, which is now positive.
	Thus, by restricting further to the sub-interval $J^\new$ and rescaling by $1/|J^\new|$ instead, the first return to $J^\new$ is rotation by $\beta^\new= G(\gamma) / (1-G(\gamma))$ as in item (\ref{item: J^new prop beta<0, 1st return}).

	Item (\ref{item: J^new prop beta<0, const sequences}) follows by an analysis of first returns to $J^\new$, which is just mirroring the case of $\beta>0$: the interval $t_J^{2n+1}(J^\new)$ lies in $J_n$ sharing the \emph{right} endpoint, completing the first return to $J^\new$ for all $x\in J^\new[0,1-\beta^\new)$, and the sequence of $f$-values depends on whether $x$ lies on the left or right of $1/2$, which only changes the first term ($\mathcal F_\pm$) in the concatenation. For those $x\in J^\new[1-\beta^\new,1)$, it takes another $2n+2$ iterations of $t_J$ to return to $J^\new$, resulting in the additional sequence $\mathcal{F}_0^\new$.
\end{proof}

As before, for the partial sum sequences $\mathcal S_+$ and $\mathcal S_-$ denote
\[
	M_+ \vcentcolon = \max \mathcal S_+, \ m_+ \vcentcolon = \min\mathcal S_+,
	\ M_- \vcentcolon = \max \mathcal S_-, \ m_- \vcentcolon = \min\mathcal S_-,
\]
and similarly for the partial sum sequences $\mathcal S^\new_+$ and $\mathcal S^\new_-$
by adding superscripts everywhere in the above equations.

The proof of the following lemma is similar to that of Lemma \ref{lemma: bounds for beta>0}, using the expressions for $\mathcal F_+^\new$ and $\mathcal F_-^\new$ in Lemma \ref{lemma: inductive step, beta<0}.

\begin{lemma}\label{lemma: bounds for beta<0}
	For the sequences $\mathcal F_+^\new$, $\mathcal F_-^\new$, and the integer $n$ defined as in Lemma \ref{lemma: inductive step, beta<0}, assuming the total sums of $\mathcal F_+$, $\mathcal F_-$, and $\mathcal F_0$ to be $1, -1, 0$, respectively, as in Assumption \ref{assump: beta<0}, and assuming $n\ge2$, we have
	\begin{itemize}
		\item $M_+^\new \geq M_+ + n$;
		\item $m_+^\new \leq m_+$;
		\item $M_-^\new \geq M_+ + (n-2)$;
		\item $m_-^\new \leq m_-$.
	\end{itemize}
\end{lemma}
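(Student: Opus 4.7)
The plan is to mirror the proof of Lemma~\ref{lemma: bounds for beta>0}, proceeding by direct inspection of the explicit formulas for $\mathcal F_+^\new$ and $\mathcal F_-^\new$ from Lemma~\ref{lemma: inductive step, beta<0} and using that $\Sigma_+ = 1$, $\Sigma_- = -1$, $\Sigma_0 = 0$. The underlying principle I would apply repeatedly is that if a concatenation $\mathcal H = \mathcal P \cdot \mathcal Q \cdot \mathcal R$ has a prefix $\mathcal P$ of total sum $\sigma$, then the partial-sum sequence $\mathcal S_{\mathcal H}$ contains the shifted sequence $\sigma + \mathcal S_{\mathcal Q}$ as a contiguous sub-block, so
\[
\max \mathcal S_{\mathcal H} \geq \sigma + \max \mathcal S_{\mathcal Q}, \qquad \min \mathcal S_{\mathcal H} \leq \sigma + \min \mathcal S_{\mathcal Q}.
\]
Each of the four bullets reduces to a single application of this principle.

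For the two bounds on maxima, I would isolate one copy of $\mathcal F_+$ inside the new sequence preceded by a prefix of large positive total sum. Rewriting $\mathcal F_+^\new = (\mathcal F_+^n) \cdot \mathcal F_+ \cdot \mathcal F_0 \cdot \mathcal F_-^n$, the parenthesized prefix has total sum $n$, so the shifted block $n + \mathcal S_+$ appears inside $\mathcal S_+^\new$ and yields $M_+^\new \geq M_+ + n$. Rewriting $\mathcal F_-^\new = (\mathcal F_- \cdot \mathcal F_+^{n-1}) \cdot \mathcal F_+ \cdot \mathcal F_0 \cdot \mathcal F_-^n$, the parenthesized prefix has total sum $-1 + (n-1) = n-2$, giving $M_-^\new \geq M_+ + (n-2)$.

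For the two bounds on minima, I would simply observe that $\mathcal F_+^\new$ begins with a copy of $\mathcal F_+$ and $\mathcal F_-^\new$ begins with a copy of $\mathcal F_-$, so (taking $\sigma = 0$) the sequences $\mathcal S_+$ and $\mathcal S_-$ appear as initial segments of $\mathcal S_+^\new$ and $\mathcal S_-^\new$, respectively. This immediately gives $m_+^\new \leq m_+$ and $m_-^\new \leq m_-$. There is no serious obstacle beyond careful bookkeeping; the hypothesis $n \geq 2$ is used only to ensure that the prefix $\mathcal F_+^{n-1}$ appearing in the third bullet is non-empty, so that the shift $n-2$ is genuinely attained by an initial segment of $\mathcal F_-^\new$. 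All estimates ultimately rest on the facts that each $\mathcal F_\pm$ contributes $\pm 1$ to the running total and that $\mathcal F_0$ is sum-neutral.
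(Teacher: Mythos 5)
Your proof is correct and uses exactly the decomposition-and-shift strategy that the paper applies to the companion statement (Lemma~\ref{lemma: bounds for beta>0}), which the paper then explicitly declines to repeat for this lemma. One small inaccuracy in a side remark: the hypothesis $n\geq 2$ is not actually needed for your argument, since when $n=1$ the prefix $\mathcal F_- \cdot \mathcal F_+^{n-1}$ degenerates to $\mathcal F_-$ alone, which still has total sum $-1=n-2$, so the third bullet's bound is still attained; the assumption $n\ge 2$ is carried along because it is what makes these bounds strong enough to force $m_-^i\to-\infty$ and $M_-^i\to+\infty$ in the proof of Proposition~\ref{prop:firstreturns}.
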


Finally we can prove Proposition \ref{prop:firstreturns}.

\begin{proof}[Proof of Proposition \ref{prop:firstreturns}]
	We inductively construct $I_i$ and check the first three items as follows. For the base case $i=1$, set $I_1=[0,1)$ and $\beta_1=\alpha$. Then the three items are either obvious or vacuous.
	Since the first coefficient of $\alpha$ is odd, Assumptions \ref{assump: basic setup for J} and \ref{assump: beta>0} hold for the rotation $t_J=t$ on $J=I_1$, where $\mathcal{F}_+=\{1\}$, $\mathcal{F}_-=\{-1\}$, and $\mathcal{F}_0=\emptyset$ is the empty sequence.
	By Lemma \ref{lemma: inductive step, beta>0}, we have a sub-interval $I_2\defeq J^\new$ symmetric about $1/2$ of length less than $\alpha_1|I_1|$, for which the first return map is rotation by $\beta^\new=-\alpha_2$, where $\alpha_2$ is defined in formula (\ref{eqn: alpha_i}).

	Note that the first return map on $I_2$ with the sequences $\mathcal{F}^\new_+$, $\mathcal{F}^\new_-$, and $\mathcal{F}^\new_0$ from Lemma \ref{lemma: inductive step, beta>0} satisfies Assumptions \ref{assump: basic setup for J} and \ref{assump: beta<0}. Thus Lemma \ref{lemma: inductive step, beta<0} produces a sub-interval $I_3$ symmetric about $1/2$ of length less than $\alpha_2|I_2|$, for which the first return map is rotation by $\alpha_3$, with new sequences of $f$-values satisfying Assumptions \ref{assump: basic setup for J} and \ref{assump: beta>0}.

	We continue this process to define $I_i$ inductively, alternating between applications of Lemma \ref{lemma: inductive step, beta>0} and Lemma \ref{lemma: inductive step, beta<0} to $I_i$ with $i$ odd and even, respectively.
	Namely, given $I_i$ and $\alpha_i$, define $I_{i+1}=I_i^\new$ and $\alpha_{i+1}=\alpha_i^\new$.
	Item (\ref{item: J^new prop beta<0, const sequences}) in Lemmas \ref{lemma: inductive step, beta>0} and \ref{lemma: inductive step, beta<0} ensures item (\ref{item: new f}) in Proposition \ref{prop:firstreturns}.

	Define inductively $\mathcal{F}_+^i$, $\mathcal{F}_-^i$, $\mathcal{F}_0^i$ as the sequences of $f$-values for first returns to $I_i$, using $\mathcal F_\pm^{i+1}=(\mathcal F_\pm^i)^\new$ and similarly for $\mathcal F_0^{i+1}$. Let $\mathcal S_\pm ^i$ be the sequences of partial sums of $\mathcal{F}_\pm^i$.
	Let $M_\pm^i$ and $m_\pm^i$ be the bounds on the partial sums $\mathcal S_\pm ^i$ estimated in Lemmas \ref{lemma: bounds for beta>0} and \ref{lemma: bounds for beta<0}.

	Finally we prove (\ref{item: near 1/2}). 
	We first prove below that there are points $t^k(1/2)\in I_1$ in
	the forward orbit of $1/2$ with $S_k(1/2)$ equal to any given integer $m$. We will explain at the end how to find such points in $I_j$ for any $j\in\Z_+$ instead of $I_1$.
	Note that the sequence $\mathcal S_-^i$ consists exactly of the Birkhoff sums of $1/2$ that occur before $1/2$
	returns to $I_i$ for the first time. We have $M_-^i = \max \mathcal S_-^i$ and $m_-^i = \min\mathcal S_-^i$. Thus, it suffices to prove that $M_-^i\to +\infty$ and $m_-^i\to -\infty$ as $i\to \infty$.

	Since the first coefficient $a_i-1$ (or $a_1$ when $i=1$) of $\alpha_i$ is odd and at least $5$ by assumption, $n_i\defeq (a_i-2)/2\ge2$ (and $n_1\defeq (a_1-1)/2\ge2$). We have $m_-^{2i}\le m_-^{2i-1}-n_{2i-1}\le m_-^{2i-1}-2$ by Lemma \ref{lemma: bounds for beta>0} and $m_-^{2i+1}\le m_-^{2i}$ by Lemma \ref{lemma: bounds for beta<0}. It follows that $m_-^{i+2}\le m_-^i-2$ for all $i$ and hence $\lim m_-^i=-\infty$.
	For a similar reason, $\lim M_+^i=+\infty$, which we now use to deduce that $\lim M_-^i=+\infty$.
	In fact, we have $M_-^{2i+2}\ge M_-^{2i+1}\ge M_+^{2i}+(n_{2i}-2)\ge M_+^{2i}$ by Lemmas \ref{lemma: bounds for beta>0} and \ref{lemma: bounds for beta<0}.
	Thus $\lim M_-^i=+\infty$ and $\lim m_-^i=-\infty$ as claimed.

    The proof above works in the same way after replacing $\alpha$ by $\alpha_j$, $I_1$ by $I_j$, $f$ by $\overline{f}_j$, and $t$ by $\overline{t}_j$ for any $j\in\Z_+$. That is, there is $k\in\Z_+$ such that $\overline{t}^k_j(1/2)\in I_j$ with $\overline{t}_j$-Birkhoff sum equal to $m$. As $\overline{t}_j$ is the first return map to $I_j$, such a point in $I_j$ is also in the forward orbit of $1/2$ under $t$, and its $\overline{t}_j$-Birkhoff sum is equal to the corresponding $t$-Birkhoff sum, which completes the proof.
\end{proof}

The same method can be used to study the Birkhoff sum along other orbits. We give a sketch for one explicit example below, which we use later to find a leaf that is not dense in Theorem \ref{theorem:lamination}.
\begin{example}\label{example: non dense full orbit}
    Fix $m\ge2$. Let $\alpha=[0; 2m+1, 2m+2, 2m+2, \cdots]$ (i.e. $a_1=2m+1$ and $a_n=2m+2$ for all $n\ge2$), which satisfies the assumption of Theorem \ref{theorem:skewprod}. 
    We consider the orbit of $x=(1+\alpha)/2$ and claim that $S_n(x)\le 0$ for all $n\in \Z$. Here we set $S_0(x)=0$ and $S_{-n}(x)=-\sum_{k=1}^n f(t^{-k}(x))$ for any $n>0$ so that $T^n(x,s)=(t^n(x), s+S_n(x))$ for all $n\in\Z$. The claim implies that the (forward and backward) orbit of $(x,0)$ under iterations of $T$ always has non-positive second coordinate.
    
    We sketch a proof of the claim. First, note that we can take care of the backward orbit by symmetry. In fact, for our particular $x$ we have $t^{-(n+1)}(x)=1-t^n(x)$ for all $n>0$, i.e. the backward orbit (starting at $t^{-1}(x)=(1-\alpha)/2$) and the forward orbit (starting at $x$) are symmetric around $1/2$, and thus the sequences of $f$-values along the forward and backward orbits differ by a negative sign. It follows that $S_{-n}(x)=S_n(x)$ for all $n\in\Z_+$. So it suffices to check that $\max_{n\ge1} S_n(x)\le 0$.

    To compute $S_n(x)$ with $n>0$, we use the same renormalization procedure with the nested intervals $I_1\supset I_2\supset \cdots$ as above.
    Let $\mathcal F^i_{\pm}$ and $\mathcal F^i_{0}$ (resp. $\mathcal S^i_{\pm}$ and $\mathcal S^i_{0}$) be the sequence of $f$-values (resp. partial sums) defined inductively as in the proof above. Let $M^i_{\pm}=\max \mathcal S^i_{\pm}$ and $M^i_0=\max \mathcal S^i_0$. 
    A direct computation shows that $t^m(x)=1-\frac{1}{2}\alpha G(\alpha)$ and $t^{2m+1}(x)=(m+1)\alpha-\frac{1}{2}\alpha G(\alpha)$, so the forward orbit enters $I_2$ for the first time after $2m+1$ iterations of $t$. In $I_2$-coordinates, we have $t^{2m+1}(x)=I_2(y)$ with 
    $$y=\frac{[(m+1)\alpha - \frac{1}{2} \alpha G(\alpha)] - [m\alpha+\alpha G(\alpha)]}{\alpha (1-G(\alpha))}=\frac{\alpha - \frac{3}{2} \alpha G(\alpha)}{\alpha (1-G(\alpha))} = 1 - \frac{1}{2}\beta,$$
    where $\beta=G(\alpha)/(1-G(\alpha))$ and the first return map $\bar{t}_2:I_2\to I_2$ is rotation by $-\beta$ in $I_2$-coordinates by Lemma \ref{lemma: inductive step, beta>0}. Our choice of $\alpha$ makes $\beta=\alpha$.
    Then applying the first return map $\bar{t}_2$ another $m$ times we
    arrive at $I_2(1-(m+1/2)\beta)$, at which point we land in $I_3$ for the first time. In $I_3$-coordinates, this is $I_3(z)$ with
    $$z = \frac{[1 - (m+1/2)\beta] - [1 - (m+1)\beta]}{\beta(1-G(\beta))} = \frac{1}{2}(1+\gamma),$$ 
    where $\gamma = G(\beta)/(1-G(\beta))$. 
    Noting that $\gamma=\beta=\alpha$ by our choice of $\alpha$, we see $z=x$, so are are now exactly at $I_3(x)$, 
    and the first return map to $I_3$ is rotation by $\gamma=\alpha$ in $I_3$-coordinates by Lemma \ref{lemma: inductive step, beta<0}. 
    Thus from here on the analysis repeats. It follows that the sequence of $f$-values along the forward orbit is given by
    \begin{equation}\label{eqn: f-val seq}
        [(\mathcal F^1_-)^{m+1}\cdot (\mathcal F^1_+)^{m} \cdot (\mathcal F^2_-)^{m}]\cdot [(\mathcal F^3_-)^{m+1}\cdot (\mathcal F^3_+)^{m} \cdot (\mathcal F^4_-)^{m}]\cdots
    \end{equation}

    The Birkhoff sums are the partial sums of this sequence, and to analyze them we compute $M_{\pm}^k$ and $M_0^k$ for all $k\ge1$. The idea of the computation is similar to the proof of Lemmas \ref{lemma: bounds for beta>0} and  \ref{lemma: bounds for beta<0}, which yields the following recursive formulas for our particular $\alpha$:
    $$M^{2k}_+=M^{2k-1}_+,\quad M^{2k}_-=M^{2k-1}_-,\quad M^{2k}_0=M^{2k-1}_+,$$
    and
    $$M^{2k+1}_+=M^{2k}_0+m+1,\quad M^{2k+1}_-=M^{2k}_0+m-1,\quad M^{2k+1}_0=M^{2k}_0+m,$$
    for all $k\ge1$.
    Then by induction, we have
    $$M^{2k-1}_+=(m+1)k-m, \quad M^{2k-1}_-=(m+1)k-m-2, \quad M^{2k-1}_0=(m+1)k-m-1,$$
    for all $k\ge2$ and
    $$M^{2k}_+=(m+1)k-m,\quad M^{2k}_-=(m+1)k-m-2,\quad M^{2k}_0=(m+1)k-m,$$
    for all $k\ge1$.
    Now by examining the sequence in formula (\ref{eqn: f-val seq}) bracket by bracket, it is straightforward to check that $\max_{n\ge1} S_n(x)=-1$.
\end{example}

\section{Background on laminations and rays}
\label{sec:background}

We recall some background on geodesic laminations and geodesic rays on hyperbolic surfaces.
Let $X$ be a complete oriented hyperbolic surface without boundary. We will typically
consider the case that $X$ is of the \emph{first kind}. This means that the limit set of
$\pi_1(X)$ acting on the universal cover $\widetilde X \cong \mathbb{H}^2$ is the
entire Gromov boundary $\partial X\cong S^1$.
A \emph{geodesic lamination}
$\Lambda$ on $X$ is a closed subset of $X$ consisting of pairwise disjoint, simple,
complete geodesics. Each such complete geodesic is called a \emph{leaf} of
$\Lambda$.

In Section \ref{sec:laminations}, we will construct laminations on hyperbolic
surfaces using train tracks, weight systems, and foliated rectangles. Here is
the necessary background.
A \emph{train track} $\tau$ on $X$ is a locally
finite graph embedded on $X$ with the following
additional structure. At any vertex $v$ of $\tau$, the set $\mathcal B(v)$ of edges incident to $v$ has a circular order induced from the orientation of $X$. We have a partition of $\mathcal B(v)$ into
a pair of non-empty sets $\mathcal B_i(v)$ and $\mathcal B_o(v)$
which we call incoming and outgoing, respectively, such that the total order defined by any $g\in \mathcal B_o(v)$ (resp. $g\in \mathcal B_i(v)$) restricts to the same order on $\mathcal B_i(v)$ (resp. $\mathcal B_o(v)$) independent of $g$. 
For $e,f \in \mathcal B_i(v)$, we write $e<f$ if and only if $(e,f,g)$ is counterclockwise at $v$ for any $g\in \mathcal B_o(v)$. 
For $e,f \in \mathcal B_o(v)$, we write $e<f$ if and only if $(e,f,g)$ is
clockwise at $v$ for any $g\in \mathcal B_i(v)$.
The edges of $\tau$ are called
\emph{branches} and the vertices of $\tau$ are called \emph{switches}. A \emph{train path} $t$
on $\tau$ is a finite or infinite path
immersed in $\tau$ with the property that at every switch
$v$, $t$ enters $v$ through $\mathcal B_i(v)$ and exits $v$ through $\mathcal B_o(v)$, or
vice versa.

A \emph{weight system} on $\tau$ is a function $w:\mathcal B(\tau)\to \R_+$ satisfying the
switch equations: for any switch $v$ of $\tau$ we have
\[
\sum_{e\in \mathcal B_i(v)} w(e) = \sum_{f\in \mathcal B_o(v)} w(f).
\]
Associated to the pair $(\tau,w)$ we construct the following
\emph{union of foliated rectangles}. For each $b\in \mathcal B(\tau)$ we assign a
rectangle $R(b)=[0,w(b)]\times[0,1]$. 
We glue the rectangles at each switch $v$ as follows.
For any switch $v$ of $\tau$ we consider the
interval $I(v)=[0,\ell]$ where
\[
\ell = \sum_{b\in \mathcal B_i(v)} w(b) = \sum_{b \in \mathcal B_o(v)} w(b).
\]
Suppose that $b_1<\ldots <b_n$ are the
outgoing branches at $v$. Then $I(v)$ is divided into consecutive closed
intervals $I_1,\ldots,I_n$ of lengths $w(b_1),\ldots,w(b_n)$, respectively,
where $0\in I_1$ and the $I_i$'s overlap only on their boundaries.
Then we glue $[0,w(b_i)]\times\{0\}\subset R(b_i)$
via an orientation-preserving isometry to the interval $I_i$.
Similarly, $I(v)$ is also divided into intervals
$J_1,\ldots,J_m$ of lengths $w(c_1),\ldots,w(c_m)$ where $c_1,\ldots,c_m$ are the
incoming branches at $v$. Then we glue $[0,w(c_j)]\times \{1\}\subset R(c_j)$
to $J_j$ via an orientation-preserving isometry.
The union of foliated rectangles $\mathcal F$
for $(\tau,w)$ is the quotient
of the disjoint union of the rectangles $R(b)$ and intervals $I(v)$ by these gluing relations.

Each rectangle $R(b)$ of $\mathcal F$ is foliated by the vertical segments
$\{v\}\times [0,1]$ for $v\in [0,w(b)]$. 
This endows $\mathcal F$ with the structure
of a singular foliation. The singularities are the points where at least three
rectangles of $\mathcal F$ meet. 
In fact, at most four rectangles can meet, in which case we have two rectangles on both sides of an interval $I(v)$. By thickening $I(v)$ to a rectangle, we assume exactly three rectangles meet at each singularity.
A \emph{leaf} $\ell$ of $\mathcal F$ is an embedding of $\R$ into $\mathcal F$ that is the union of a sequence
\[
\ldots \sigma_{-1} \sigma_0 \sigma_1\ldots
\]
of vertical line segments $\sigma_i = \{v_i\}\times [0,1]\subset R(b_i)$ with consecutive segments meeting at endpoints, 
and which satisfies the following conditions. 
First, we require
that $\ldots b_{-1}b_0b_1\ldots$ is a train path of $\tau$.
Second, we have an additional requirement when the leaf contains at least two singularities, which we now describe. Given an orientation of a leaf $\ell$, singularities on $\ell$ fall into two types, merging or splitting; see Figure \ref{fig: train}. Moreover, singularities along $\ell$ must alternate between the two types as they arise from gluing of rectangles. 
At a merging singularity, there are two possible local pictures of $\ell$, namely merging from the left or right branch. Similarly, at a splitting singularity, $\ell$ splits to the left or right branch.
For a leaf $\ell$ containing at least two singularities, we require a choice of \emph{left} or \emph{right}: either $\ell$ always merges from the left and splits to the left, or it always merges from the right and splits to the right; see Figure \ref{fig: train}. 
\begin{figure}
    \centering
    \includegraphics[scale=0.8]{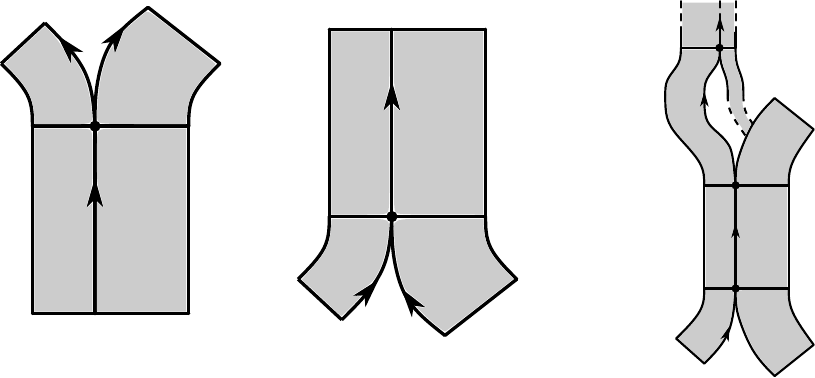}
    \caption{Left: two singular leaves that split after passing through a splitting singularity; middle: two singular leaves that merge after passing through a merging singularity; right: a singular leaf (indicated by the arrows) passes through three singularities and always splits to the left and merges from the left.}
    \label{fig: train}
\end{figure}
A leaf will be called \emph{singular} if it contains a singularity and \emph{non-singular} otherwise.
We will use unions of foliated rectangles in Section \ref{sec:laminations} to construct geodesic laminations on hyperbolic surfaces.

Finally, suppose that $X$ has an isolated puncture $p$. A \emph{ray} $\ell$ is any complete
simple geodesic asymptotic to $p$ on at least one end.
The ray $\ell$ is a \emph{loop} if it is
asymptotic to $p$ on both ends. A ray is \emph{filling} if it intersects every loop based
at $p$. Denote by $\mathcal R(X;p)$ the graph whose vertices are the rays based at $p$,
and whose edges join pairs of rays that are disjoint. By \cite{Simult}, the graph
$\mathcal R(X;p)$ consists of uncountably many connected components. Among these
components, exactly one is of infinite diameter and Gromov hyperbolic. The remaining components are
\emph{cliques} of rays. These cliques consist of pairwise disjoint rays,
each of which intersects every ray
not lying in the clique. A ray in such a clique connected component
is called \emph{high-filling}.
The set of cliques of high-filling rays is identified with the Gromov boundary
$\partial \mathcal R(X;p)$ \cite[Theorem 6.3.1]{Simult}.
If a ray is filling but not high-filling then it is called \emph{$2$-filling} \cite[Lemma 5.6.4]{Simult}.
Thus there is a trichotomy:
a ray is either not filling, $2$-filling, or high-filling. As such, $2$-filling rays can be
thought of as \emph{fake boundary points} for the graph $\mathcal R(X;p)$, with properties
mimicking those of high-filling rays.
Finally, note that a priori the graph $\mathcal R(X;p)$ depends on the particular hyperbolic
metric $X$. However, if $Y$ is a different first kind complete hyperbolic surface homeomorphic
to $X$, then there is a natural bijection between the rays on $X$ based at $p$ and the rays
on $Y$ based at $p$; see the end of Part 1 in \cite{Simult}.
This bijection preserves the property of a ray being a loop, $2$-filling,
high-filling, etc. Hence we may actually define
$\mathcal R(S;p)\vcentcolon = \mathcal R(X;p)$ where $S$ is the underlying
topological surface to $X$, and the graph $\mathcal R(S;p)$ is well-defined
independent of a particular first kind hyperbolic metric on $S$.
When $S$ is the plane minus a Cantor set then $2$-filling rays exist, and the construction can be applied to many other surfaces of infinite type \cite{2Fill}. Theorem \ref{theorem:infiniteclique} now confirms their existence on any infinite type surface with at least one isolated puncture.

\section{Laminations}
\label{sec:laminations}

We consider the train track $\tau$ illustrated in Figure \ref{figure:rotation}. The weights
of three branches are labeled for some $\alpha\in(0,1)$. The weights of the other branches are determined by these
via the switch equations.
Associated to these weights on $\tau$, we construct the
standard \emph{union of foliated rectangles} $F$. There is a single branch of weight $1$
in $\tau$ which gives rise to a rectangle $R$ of $F$. 

\begin{figure}[h]

	\centering

	\def\svgwidth{0.5\textwidth}
\begingroup%
  \makeatletter%
  \providecommand\color[2][]{%
    \errmessage{(Inkscape) Color is used for the text in Inkscape, but the package 'color.sty' is not loaded}%
    \renewcommand\color[2][]{}%
  }%
  \providecommand\transparent[1]{%
    \errmessage{(Inkscape) Transparency is used (non-zero) for the text in Inkscape, but the package 'transparent.sty' is not loaded}%
    \renewcommand\transparent[1]{}%
  }%
  \providecommand\rotatebox[2]{#2}%
  \newcommand*\fsize{\dimexpr\f@size pt\relax}%
  \newcommand*\lineheight[1]{\fontsize{\fsize}{#1\fsize}\selectfont}%
  \ifx\svgwidth\undefined%
    \setlength{\unitlength}{109.55356628bp}%
    \ifx\svgscale\undefined%
      \relax%
    \else%
      \setlength{\unitlength}{\unitlength * \real{\svgscale}}%
    \fi%
  \else%
    \setlength{\unitlength}{\svgwidth}%
  \fi%
  \global\let\svgwidth\undefined%
  \global\let\svgscale\undefined%
  \makeatother%
  \begin{picture}(1,0.33007337)%
    \lineheight{1}%
    \setlength\tabcolsep{0pt}%
    \put(0,0){\includegraphics[width=\unitlength,page=1]{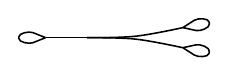}}%
    \put(0.97,0.1){\color[rgb]{0,0,0}\makebox(0,0)[lt]{\lineheight{1.25}\smash{\begin{tabular}[t]{l}\textit{$\alpha/2$}\end{tabular}}}}%
    \put(0.95,0.22486978){\color[rgb]{0,0,0}\makebox(0,0)[lt]{\lineheight{1.25}\smash{\begin{tabular}[t]{l}\textit{$(1-\alpha)/2$}\end{tabular}}}}%
    \put(0,0){\includegraphics[width=\unitlength,page=2]{rotation_track.pdf}}%
    \put(0.38204715,0.2){\color[rgb]{0,0,0}\makebox(0,0)[lt]{\lineheight{1.25}\smash{\begin{tabular}[t]{l}\textit{$1$}\end{tabular}}}}%
  \end{picture}%
\endgroup%

	\caption{A weighted train track $\tau$. The second return map to a horizontal interval in
		the rectangle $R$ of the union of foliated rectangles $F$ is a rotation by $\alpha$.}
	\label{figure:rotation}
\end{figure}

The train track $\tau$ has an infinite cyclic cover $\widetilde \tau$ which is pictured in
Figure \ref{figure:cover}. The weights on $\tau$ pull back to weights on
$\widetilde \tau$, some of which are labeled in Figure \ref{figure:cover}.

\begin{figure}[h]
	\labellist
	\small \hair 2pt
	\pinlabel $\frac{1-\alpha}{2}$ at 25 140
	\pinlabel $\frac{\alpha}{2}$ at 70 140
	\pinlabel $\frac{1-\alpha}{2}$ at 115 140
	\pinlabel $\frac{\alpha}{2}$ at 160 140
	\pinlabel $\frac{1-\alpha}{2}$ at 205 140
	\pinlabel $\frac{\alpha}{2}$ at 250 140
	\pinlabel $\frac{1-\alpha}{2}$ at 295 140
	\pinlabel $\frac{\alpha}{2}$ at 340 140
	\endlabellist
	\centering
	\includegraphics[scale=0.9]{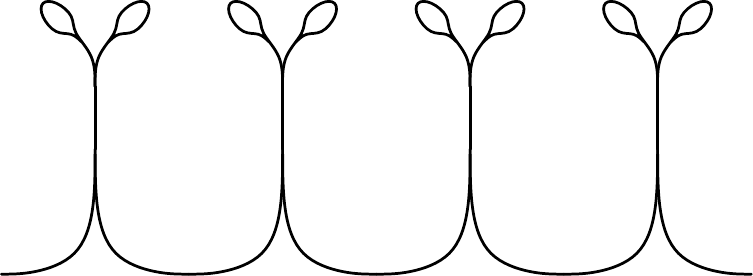}
	\caption{The infinite cyclic cover $\widetilde \tau$ of $\tau$.}
	\label{figure:cover}
\end{figure}

We consider the union of foliated rectangles $\widetilde F$ for $\widetilde \tau$ with
the described weights; see Figure \ref{figure: foliated}.
Then $\widetilde F$ is an infinite cyclic cover of $F$. The branches
of $\widetilde \tau$ of weight $1$ give rise to a sequence of rectangles
$\ldots, R_{-1}, R_0, R_1,\ldots$ in $\widetilde F$ which are indexed by $\Z$ and cover
the rectangle $R$ of $F$ corresponding to the branch of $\tau$ with weight $1$.
We choose the numbering so that there is a rectangle joining $R_i$ to $R_{i+1}$
and a rectangle joining $R_i$ to $R_{i-1}$ for each $i$, both with width $1/2$.
For the rectangle of width $1/2$ joining $R_i$ and $R_{i+1}$, one of its boundary leaves (the lower boundary in Figure \ref{figure: foliated}) extends to a singular leaf $\ell_i$ in $\tilde F$ passing through the singularities $s_i$ and $s_{i+1}$.
The leaves $\ell_{i-1}$ and $\ell_i$ share a ray $r_i$ starting at $s_i$.
The following consequence of Corollary \ref{cor: main} is crucial to our construction of an infinite clique of $2$-filling rays.
\begin{lemma}
	\label{lemma:denseleaves}
	Suppose that $\alpha\in (0,1)$ satisfies the conditions of Theorem
	\ref{theorem:skewprod}. Then for each $i$, the ray $r_i$ of $\widetilde F$	is dense in $\widetilde F$, hence so is the singular leaf $\ell_i$.
\end{lemma}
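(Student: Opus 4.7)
The plan is to identify the successive crossings of $r_i$ with a natural set of horizontal cross-sections of the rectangles $R_j$ with the forward orbit of a point of the form $(1/2, i')$ under the skew product $T_\alpha$, and then invoke Corollary~\ref{cor: main}.

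First I would choose a horizontal cross-section $\sigma_j \subset R_j$ for each $j \in \Z$ (say its midline), identify $\sigma_j$ with $S^1 = [0,1]/\sim$ in the natural way, and set $\Sigma \defeq \bigsqcup_{j\in\Z}\sigma_j \cong S^1 \times \Z$. By the construction of $\widetilde F$---and by the design of $\tau$ encoded in the caption of Figure~\ref{figure:rotation}, where the second return map to a horizontal cross-section of $R \subset F$ is rotation by $\alpha$---the first return of the vertical foliation of $\widetilde F$ to $\Sigma$ is exactly the skew product $T_\alpha$. Concretely, a vertical flow segment starting at $(x,j)\in\sigma_j$ enters the width-$1/2$ connecting rectangle to $R_{j+1}$ if $x\in[0,1/2)$ and to $R_{j-1}$ if $x\in[1/2,1)$, and after crossing to $\sigma_{j\pm 1}$ its horizontal coordinate has rotated by $\alpha \pmod 1$; this matches $T_\alpha(x,j) = (x+\alpha, j+f(x))$ on the nose.

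Second, the singularity $s_i$ lies on $\partial R_i$ at horizontal coordinate $1/2$, where the connecting rectangles are glued in. The ray $r_i$ leaves $s_i$ along the singular leaf $\ell_{i-1}\cap \ell_i$, and its first crossing of $\Sigma$ (after a short initial arc traversing at most one rectangle) is a point of the form $(1/2, i')$ with $i' \in \{i-1, i, i+1\}$; subsequent crossings of $r_i$ with $\Sigma$ are precisely the forward $T_\alpha$-iterates of this point. Corollary~\ref{cor: main} applied with this $i'$ then guarantees density of $r_i \cap \Sigma$ in $\Sigma = S^1 \times \Z$.

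Finally I would upgrade density on $\Sigma$ to density in all of $\widetilde F$: every point $p \in \widetilde F$ lies on some vertical flow arc from $\sigma_j$ to the next cross-section in $\Sigma$, so continuity of the flow together with density of $r_i \cap \Sigma$ in $\Sigma$ forces $r_i$ to accumulate on every such arc. Since $r_i \subset \ell_i$, the singular leaf $\ell_i$ is then also dense, as claimed. The main obstacle will be the first step, namely pinning down the precise identification of the first-return map to $\Sigma$ with $T_\alpha$: this requires matching the combinatorics of the weighted cover $\widetilde\tau$ to the partition $[0,1/2)\sqcup[1/2,1)$ that drives the sign of $f$, and making a consistent choice of the ``always merge and split to the same side'' convention from Section~\ref{sec:background} for the singular leaves passing through the $s_j$.
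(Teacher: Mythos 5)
Your overall strategy---parameterize the rectangles $R_j$ by $[0,1]^2\times\Z$, identify the successive crossings of $r_i$ with a cross-section as the orbit of a skew product, then invoke Corollary~\ref{cor: main}---is exactly the paper's. But the central identification you make is not correct ``on the nose,'' and this is a genuine (though fixable) gap. Each leaf of the vertical foliation traverses $R_j$ \emph{twice}: it enters at the bottom at horizontal coordinate $x$ travelling upward, wraps around the top, and comes back down at coordinate $b(x)=1-t(x)$, exiting at $(b(x),0,j)$. Consequently, whether the leaf next enters $R_{j+1}$ or $R_{j-1}$ is governed by the \emph{exit} coordinate $b(x)$ relative to $1/2$ --- equivalently by $f(t(x))$, not $f(x)$. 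So the return map to the (upward) bottom crossings is $(x,j)\mapsto\bigl(t(x),\,j+f(t(x))\bigr)$, not $T_\alpha$; these two maps genuinely disagree on a set of positive measure (e.g.\ on $[1/2-\alpha,1/2)$, where $f(x)=+1$ but $f(t(x))=-1$). The upshot is that the crossings of $r_i$ with the bottom edges are $\bigl(t^{n-1}(1/2),\,0,\,i+1+S_n(1/2)\bigr)$ for $n\ge1$, i.e.\ a version of the $T_\alpha$-orbit of $(1/2,i+1)$ in which the first and second coordinates are out of sync by one step.

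This shift is precisely why Corollary~\ref{cor: main} is stated with the translated index set $k+\Sigma(1/2,m)$ rather than just $\Sigma(1/2,m)$: the paper applies it with $k=-1$ to conclude that the pairs $\bigl(t^{n-1}(1/2),\,i+1+S_n(1/2)\bigr)$ are dense in $[0,1]\times\Z$. If one only had the $k=0$ version (Theorem~\ref{theorem:skewprod} itself), the argument would not quite close. You correctly flagged ``pinning down the precise identification'' as the main obstacle; the obstacle is real, and resolving it requires noticing both the up-and-down double traversal of each $R_j$ and the resulting replacement of $f(x)$ by $f(t(x))$. Your final step, passing from density in the cross-section to density in $\widetilde F$, matches the paper's and is fine.
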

\begin{figure}
	\labellist
	\small \hair 2pt
	\pinlabel $s_{-1}$ at 105 10
	\pinlabel $s_0$ at 210 10
	\pinlabel $s_1$ at 315 10
	\pinlabel $s_2$ at 430 45
	\pinlabel $R_{-1}$ at 55 85
	\pinlabel $R_0$ at 165 85
	\pinlabel $R_1$ at 270 85
	\pinlabel $R_2$ at 375 85
	\endlabellist
	\centering
	\includegraphics[scale=0.8]{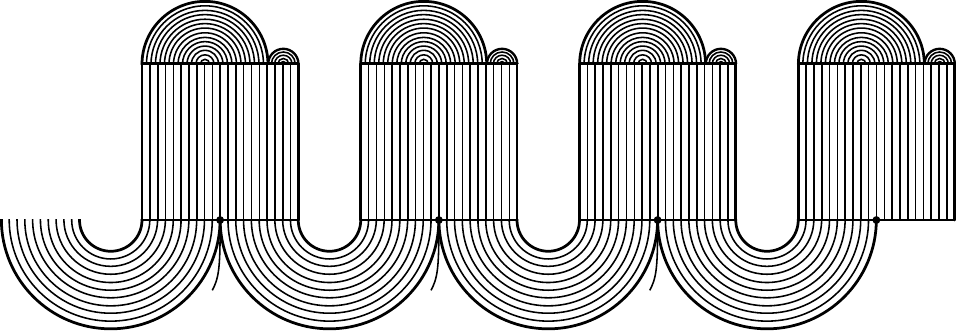}
	\caption{Part of the union of foliated rectangles $\widetilde{F}$}\label{figure: foliated}
\end{figure}
\begin{proof}
	To prove this, we parameterize the disjoint union $\bigcup_{i\in \Z} R_i$ by
	$[0,1]^2 \times \Z$, where $R_i$ is isometrically
	identified with the unit square $[0,1]^2$ and the leaves
	of $\widetilde F$ intersect the rectangles $[0,1]^2$ in vertical segments
	$\{x\}\times[0,1]$. We further choose the orientation on the vertical segments such that the singularity
	$s_i$ is given by $(1/2, 0, i)$ in these coordinates and each $r_i$ starts from $s_i$ by going upwards; see Figure \ref{figure: foliated}.
	
	After $r_i$ passes through $R_i$ for the first time, 
	it travels downwards in $R_i$ along the vertical segment with coordinate $(1-\alpha)-1/2 = 1/2-\alpha$ since $\alpha<1/2$ as in Theorem \ref{theorem:skewprod}.
	Thus $r_i$ passes through $(1/2-\alpha,0,i)$ to exit $R_i$. 
	At this point, $r_i$ will enter $R_{i-1}$ since $1/2-\alpha<1/2$, and it travels upwards starting at
	$(1-1/2+\alpha,0,i-1)=(1/2+\alpha,0,i-1)$.
	
	In general, if at some point the ray $r_i$ is traveling upwards in some $R_j$ along the vertical segment with coordinate $x\in(0,1)$, it hits the top of $R_j$ and then starts to travel downwards in $R_j$ along $\{b(x)\}\times[0,1]$, where 
	$$b(x)=\left\{ \begin{array}{ll}
		1-\alpha-x	& \text{if } x<1-\alpha,\\
		2-\alpha-x	& \text{if } x>1-\alpha.
	\end{array}\right.$$
	Note that $b(x)=1-t(x)$, where $t=t_\alpha$ is the rotation by $\alpha$ as in the introduction, that is, $t(x)\in (0,1)$ is the fractional part of $x+\alpha$. 
	At this point, $r_i$ exits $R_j$ at $(b(x),0,j)$ and enters $R_{j'}$ with $j'=j+1$ if $b(x)>1/2$ and $j'=j-1$ if $b(x)<1/2$. That is, $j'=j+f(t(x))$ for the function $f=\chi_{[0,1/2)}-\chi_{[1/2,1)}$ as in the definition of the transformation $T$ in Theorem \ref{theorem:skewprod}. Moreover, $r_i$ enters $R_{j'}$ by traveling upwards along the vertical segment with coordinate $1-b(x)=t(x)$.
	
	In the calculations above, we ignored all boundary cases since we only care about $x=t^n(1/2)$ for some $n\ge0$ and $\alpha$ is irrational.
	
	It follows from the analysis above that $r_i$ visits the $n$-th rectangle
 with entry point
	\[
	\big(t^{n-1}(1/2), 0, i + \sum_{j=1}^{n-1}f(t^j(1/2))\big).
	\]
	The sum $\sum_{j=1}^{n-1} f(t^j(1/2))$ is equal to $S_{n}(1/2) - f(1/2)=S_{n}(1/2)+1$ where
	$S_n$ is the $n$-th Birkhoff sum defined in the introduction. Thus, $r_i$ contains the points $\big(t^{n-1}(1/2),0, i+1 + S_{n}(1/2)\big)$ for $n\geq 1$, and the pairs
	$(t^{n-1}(1/2), i+1 +S_{n}(1/2))$ are dense in $[0,1]\times \Z$ by Corollary \ref{cor: main}.
	To see the last claim, note that for each $m\in\Z$, the set of $n\ge 0$ with $i+1+S_{n}(1/2)=m$ is $\Sigma(1/2,m-i-1)$, so the pairs above contain $(t^{n}(1/2), m)$ for all $n\in \Sigma(1/2,m-i-1)-1$ except possibly $n=0$, and the first coordinates of such pairs are dense by Corollary \ref{cor: main}.
	This proves that $r_i$ is dense in $R_m$ for any $m\in\Z$ and hence dense
	in the whole foliation $\widetilde F$. This completes the proof of Lemma \ref{lemma:denseleaves}.
\end{proof}

We now define a geodesic lamination on an infinite type hyperbolic surface. The track
$\widetilde \tau$ may be folded to yield the train track $\hat \tau$ pictured on the left of Figure
\ref{figure:folded}. The track $\hat \tau$ is carried by another track $\sigma$ shown on the right of Figure \ref{figure:folded}, which can in turn be embedded in any infinite type
surface $\Sigma$ with at least one isolated puncture $p$ as we explain below; 
see the left of Figure \ref{figure:proper}.

\begin{figure}[h]

	\centering

	\includegraphics[scale=0.8]{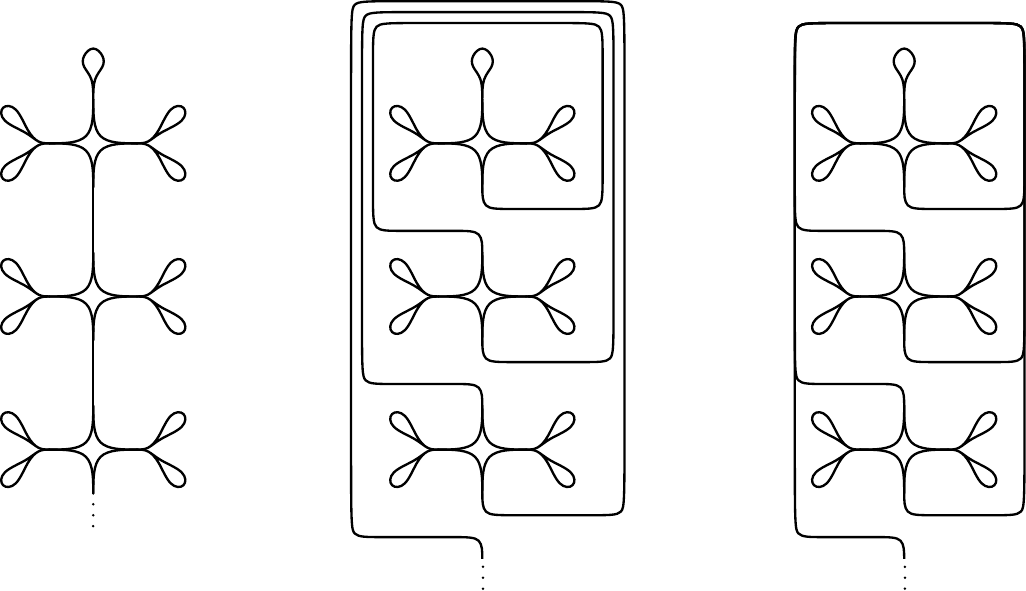}

	\caption{Left: The track $\hat \tau$ obtained by folding $\widetilde \tau$; Middle: Another embedding of $\widetilde \tau$ that spirals; Right: The track $\sigma$ obtained by collapsing parallel branches.}
	\label{figure:folded}
\end{figure}

\begin{figure}[h]
	\labellist
	\small \hair 2pt
	
	\pinlabel $p$ at 75 320
	\pinlabel $\gamma_0$ at -7 305
	\pinlabel $\Sigma_0$ at 15 362
	
	\pinlabel $\mathfrak{s}_1$ at 92 291
	\pinlabel $\mathfrak{s}_{-1}$ at 48 291
	\pinlabel $Q_1$ at 68 286
	\pinlabel $\gamma_1$ at -7 232
	\pinlabel $\Sigma_1$ at -7 272
	
	\pinlabel $\mathfrak{s}_2$ at 92 196
	\pinlabel $\mathfrak{s}_{-2}$ at 48 196
	\pinlabel $Q_2$ at 68 191
	\pinlabel $\gamma_2$ at -7 138
	\pinlabel $\Sigma_2$ at -7 188
	
	\pinlabel $\mathfrak{s}_3$ at 92 103
	\pinlabel $\mathfrak{s}_{-3}$ at 48 103
	\pinlabel $Q_3$ at 68 98
	\pinlabel $\gamma_3$ at -7 45
	\pinlabel $\Sigma_3$ at -7 95
	\pinlabel $\Sigma$ at 80 5
	
	\pinlabel $p$ at 255 320
	\pinlabel $R_\infty$ at 257 20
	\endlabellist
	\centering
	\includegraphics{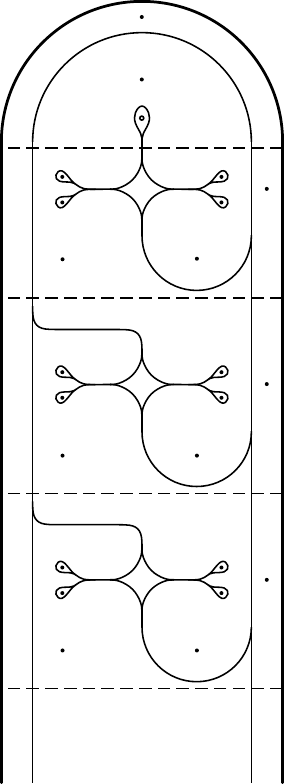}\hspace{40pt}
	\includegraphics{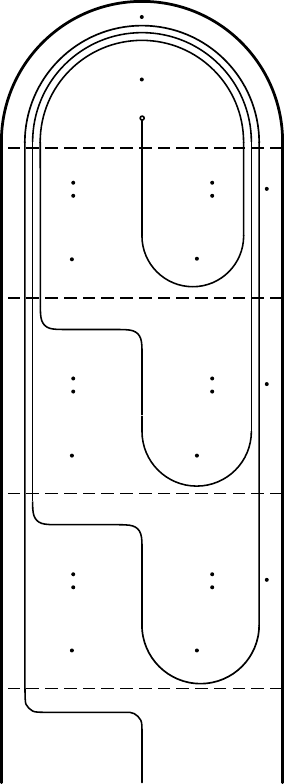}
	\caption{Left: The track $\sigma$ obtained by embedding $\hat \tau$ on an infinite
		type surface $\Sigma$ and collapsing parallel branches. Right: The non-filling
		ray $R_\infty$ on $\Sigma$.}
	\label{figure:proper}
\end{figure}

On the left of Figure \ref{figure:proper}, every
tiny black disk represents a subsurface of $\Sigma$ with a single boundary component, corresponding to the boundary of the black disk. We require each such subsurface to have either positive genus or at least two punctures. 
Furthermore, in $\Sigma$ the border line pictured on the left of Figure
\ref{figure:proper} is glued to itself by a reflection across the central vertical line, so that $\Sigma$ has no boundary.
With this identification, each dotted horizontal line segment represents an essential simple closed curve $\gamma_i$ on $\Sigma$.
Thus, the surface $\Sigma$ with the black disks removed is a flute surface $\Sigma'$.
Any infinite-type surface  without boundary and with at least one isolated puncture can be realized this way 
by appropriately choosing the topological type of the subsurfaces represented by the black disks.

\begin{lemma}\label{lemma: realize S}
    For any orientable surface $S$ of infinite type with at least one isolated puncture $p$, there is a sequence of surfaces $\{D_i\}_{i\ge1}$ each with one boundary component and either positive genus or at least two punctures, so that the surface $(\Sigma,p)$ above with the black disks homeomorphic to the $D_i$'s is homeomorphic to $(S,p)$.
\end{lemma}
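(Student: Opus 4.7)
The plan is to apply the classification of orientable surfaces (Ker\'ekj\'art\'o--Richards) and match topological invariants. Recall that such a surface without boundary is determined up to homeomorphism by the triple $(g, E, E_g)$, where $g\in\{0,1,\ldots,\infty\}$ is the genus, $E$ is the (compact, metrizable, totally disconnected) space of ends, and $E_g\subseteq E$ is the closed subspace of ends accumulated by genus. We denote these invariants of $S$ by $(g(S), E(S), E_g(S))$; the hypothesis that $p$ is an isolated puncture says that $p\in E(S)$ is isolated and $p\notin E_g(S)$.

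First I would compute the invariants of $\Sigma$ from the data $\{D_i\}$. The separating curves $\gamma_i$ exhaust $\Sigma$ from above and converge to a single end $p_\infty$ at the bottom of the picture, distinct from $p$. A short analysis of the construction gives
\[
E(\Sigma) = \{p\}\sqcup\Big(\{p_\infty\}\cup\bigsqcup_i E(D_i)\Big),
\]
where each $E(D_i)$ is clopen in $E(\Sigma)\setminus\{p\}$ and every open neighborhood of $p_\infty$ contains all $E(D_i)$ for $i$ sufficiently large; furthermore $E_g(\Sigma)\cap E(D_i) = E_g(D_i)$, while $p_\infty\in E_g(\Sigma)$ if and only if $g(D_i)>0$ for infinitely many $i$, and $g(\Sigma)=\sum_i g(D_i)$.

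To construct the $D_i$, first pick $p_\infty\in E(S)\setminus\{p\}$ to play the role of the bottom end: if $E(S)\setminus\{p\}$ has an accumulation point, choose any such; otherwise $E(S)\setminus\{p\}$ is finite, so the fact that $S$ has infinite type forces $g(S)=\infty$, hence $E_g(S)\neq\emptyset$, and any $p_\infty\in E_g(S)$ will do. Next decompose $E(S)\setminus\{p,p_\infty\}$ as a disjoint union $\bigsqcup_i E_i$ of clopen subsets approaching $p_\infty$: when $p_\infty$ is an accumulation point, take a decreasing clopen neighborhood basis $V_1\supset V_2\supset\ldots$ of $p_\infty$ in $E(S)\setminus\{p\}$ with $\bigcap V_i=\{p_\infty\}$ and set $E_i=V_{i-1}\setminus V_i$; when $p_\infty$ is isolated, take only finitely many $E_i$ nonempty. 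Then take each $D_i$ to be the (unique up to homeomorphism) orientable surface with one boundary component, end space $E_i$, ends accumulated by genus $E_g(S)\cap E_i$, and genus $g(D_i)$ chosen so that $\sum_i g(D_i)=g(S)$ and so that infinitely many $D_i$ have positive genus exactly when $p_\infty\in E_g(S)$. With these choices the invariants of $\Sigma$ agree with those of $S$, and the classification yields the desired homeomorphism $(\Sigma,p)\cong (S,p)$.

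The main obstacle is to honor the constraint that every $D_i$ has positive genus or at least two punctures, that is, to rule out $D_i$ being a disk or an annulus. If some $E_i$ produced above has $|E_i|\le 1$, we either assign $D_i$ positive genus or merge $E_i$ with an adjacent clopen piece in the decomposition. The bookkeeping depends on whether $g(S)$ is finite and whether $p_\infty\in E_g(S)$: in the case $g(S)=\infty$ one can freely add a handle to each otherwise-offending $D_i$ without disturbing $E_g(S)$, while in the case $g(S)<\infty$ (which forces $|E(S)|=\infty$) one pairs up the isolated points of $E(S)\setminus\{p,p_\infty\}$ so that $|E_i|\ge 2$ for all but the finitely many $i$ that absorb the prescribed finite total genus. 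A short case analysis completes the argument.
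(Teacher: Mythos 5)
Your overall strategy mirrors the paper's: invoke the Ker\'ekj\'art\'o--Richards classification, pick a distinguished end $p_\infty$ (an accumulation point of $E(S)\setminus\{p\}$ if one exists, otherwise a nonplanar end), decompose $E(S)\setminus\{p,p_\infty\}$ into a sequence of clopen pieces $E_i$ approaching $p_\infty$, and realize each $(E_i, E_g(S)\cap E_i)$ by a bounded surface $D_i$ with suitably chosen genus. The computation of the invariants $(g(\Sigma), E(\Sigma), E_g(\Sigma))$ from the data of the $D_i$ is also correct.

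The final paragraph, however, contains a step that would fail. You claim that when $g(S)=\infty$ one can ``freely add a handle to each otherwise-offending $D_i$ without disturbing $E_g(S)$.'' This is false when $p_\infty$ is a planar end and infinitely many $E_i$ are singletons, a situation that does occur: take $S$ to be a Loch Ness monster with a sequence of punctures accumulating at a single planar end $q$, together with one further isolated puncture $p$. Then $g(S)=\infty$, yet $p_\infty$ is forced to be $q$ (the unique accumulation point of $E(S)\setminus\{p\}$), which lies outside $E_g(S)$; any natural nested decomposition produces infinitely many singleton shells $E_i$. Adding a handle to each of those $D_i$'s makes $p_\infty$ accumulated by genus in $\Sigma$, so $(\Sigma,p)\not\cong(S,p)$. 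The correct repair in this subcase is the other device you mention, namely merging singleton $E_i$'s into adjacent shells so that every $E_i$ has at least two points; that is exactly what the paper's proof does (it relabels so each shell $U_i$ has at least two points) and then assigns positive genus $n_i$ to infinitely many $D_i$'s \emph{only} when $p_\infty$ is nonplanar. In short, your case split should route on whether $p_\infty\in E_g(S)$, not merely on whether $g(S)$ is finite; the handle-adding fix is safe only when $p_\infty\in E_g(S)$, whereas the merging fix always works.
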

\begin{proof}
    Recall the classification of (possibly non-compact) orientable surfaces without boundary \cite{Richards}. Each surface $S$ has a space of ends $E$, which is totally disconnected, compact, and metrizable.
    The non-planar ends form a closed subset $E_g\subset E$, which is nonempty if and only if $S$ has infinite genus. 
    Then the classification states that two surfaces are homeomorphic if and only if they have the same genus (possibly infinite) and the pairs of spaces of ends $(E,E_g)$ are homeomorphic. Moreover, given any pair $(E,E_g)$ with $E$ totally disconnected, compact, and metrizable and $E_g\subset E$ closed, and given $n\in\Z_{\ge0}\cup\{\infty\}$ so that $n<\infty$ iff $E_g=\emptyset$, there is an orientable surface $S$ with genus $n$ and spaces of ends homeomorphic to $(E,E_g)$.
    We say $S$ is of infinite type if either $E$ is an infinite set or $E_g$ is nonempty.

    Consider any $S$ of infinite type with an isolated puncture $p$, and denote its space of ends by $E$. Since $p$ is isolated, $E\setminus \{p\}$ is clopen. There are two cases:
    \begin{enumerate}
        \item Suppose $E$ is infinite. Then there is an accumulation point $x\in E$ and a sequence of nested clopen neighborhoods $E\setminus \{p\}=V_1\supset V_2\supset \cdots$ of $x$ with $\cap_i V_i=\{x\}$. Up to relabelling, we may assume that $U_i\defeq V_i\setminus V_{i+1}$ contains at least two points for all $i\ge 1$. For each $i\ge1$, there is a surface $S_i$ with space of ends homeomorphic to $(U_i, U_i\cap E_g)$. 
        Moreover, if $U_i\cap E_g=\emptyset$, then we can choose $S_i$ to have any genus $n_i\in\Z_{\ge0}$, which we now specify. If $S$ has finite genus, we may choose the $n_i$'s so that $\sum n_i$ is equal to the genus of $S$. 
        If $S$ has infinite genus, for each $i$ with $U_i\cap E_g= \emptyset$, we choose $n_i=0$ if $x$ is planar and $n_i>0$ if $x$ is non-planar. Let $D_i$ be $S_i$ with an open disk removed. 
        Then each $D_i$ either has positive genus or has at least two punctures.
        Choose the black disks in the construction of our surface $\Sigma$ above to be homeomorphic to the surfaces $D_i$. Then $(\Sigma,p)$ is homeomorphic to $(S,p)$ by the classification of surfaces.

        \item Suppose $E$ is finite. Then $E_g$ must be nonempty for $S$ to be of infinite type. Then $E=E_g\sqcup E'\sqcup\{p\}$ where $E'$ consists of planar ends other than $p$. For $1\le i\le |E_g|-1$, let $S_i$ be the surface of infinite genus and exactly one (non-planar) end, i.e. the Loch Ness monster. For $i=|E_g|$, let $S_i$ be the surface of genus one with $|E'|$ punctures. For $i>|E_g|$, let $S_i$ be the torus. Now for each $i\in\Z_+$, let $D_i$ be $S_i$ with an open disk removed. Choose the black disks in the construction of our surface $\Sigma$ above to be homeomorphic to the surfaces $D_i$. Then $\Sigma$ has infinite genus and has the same pair of spaces of ends as $S$, so again $(\Sigma,p)$ is homeomorphic to $(S,p)$ by the classification of surfaces.
    \end{enumerate}
\end{proof}

The simple closed curves $\{\gamma_i\}_{i\ge0}$ cut $\Sigma'$ into an infinite sequence of finite type subsurfaces $\{\Sigma_i\}_{i\ge0}$.
For each $i\ge 1$ (resp. $i=0$), the surface $\Sigma_i$ is bounded by two (resp. one) $\gamma_i$'s together with $7$ (resp. $2$) boundary components of black disks (resp. and a puncture $p$). Thus each $\Sigma_i$ admits a complete hyperbolic structure with geodesic boundary components all of length $1$.
In the sequel, we choose the metric on $\Sigma_i$ with the additional property that the (finitely many) train paths in $\sigma\cap \Sigma_i$ have lengths bounded above independent of $i$, 
which can be done for instance by making $\Sigma_i$ ($i\ge1$) all isometric under the obvious translation in Figure \ref{figure:proper}.
Since each black disk represents a surface with positive genus or at least two punctures, it admits a hyperbolic structure of the first kind so that the boundary is a geodesic of length $1$. 
Hence by gluing, we can endow $\Sigma$ with a complete hyperbolic metric of the first kind so that 
\begin{enumerate}
    \item each $\gamma_i$ is a closed geodesic of length $1$,
    \item the train paths in $\sigma\cap \Sigma_i$ have lengths bounded above independent of $i$.
\end{enumerate}

Let $\widetilde \Sigma\cong\Hbb^2$ be the universal cover of $\Sigma$. Consider the preimage $\widetilde \sigma$ of $\sigma$ and the collection $\mathcal L$ of lifts of all $\gamma_i$'s to $\widetilde \Sigma$.
We notice the following fact:
\begin{lemma}\label{lemma:straighten}
	For the choice of hyperbolic metric on $\Sigma$ above, there are uniform constants $K,C>0$ such that any bi-infinite train path of $\widetilde \sigma$ is a $(K,C)$-quasi-geodesic. In particular, it limits to two distinct points on the Gromov boundary $\partial \widetilde \Sigma$.
\end{lemma}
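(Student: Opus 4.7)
The plan is to combine the hyperbolic collar lemma with the uniform bound on the lengths of train paths inside each $\Sigma_i$. First, by the collar lemma applied to the length-$1$ closed geodesics $\gamma_i$, there is a uniform width $w > 0$ such that each $\gamma_i$ admits an embedded open $w$-neighborhood in $\Sigma$, and all these neighborhoods are pairwise disjoint. Lifting to $\widetilde{\Sigma}$, one obtains pairwise disjoint embedded $w$-collars around each element of $\mathcal{L}$. Since the $\gamma_i$'s are pairwise disjoint separating curves on $\Sigma$, the collection $\mathcal{L}$ endows $\widetilde{\Sigma}$ with the structure of a dual tree $T$: the vertices of $T$ are the components of $\widetilde{\Sigma} \setminus \bigcup \mathcal{L}$ (each a lift of some $\Sigma_i$), and the edges of $T$ are the elements of $\mathcal{L}$.

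Next, let $\gamma$ be a bi-infinite train path in $\widetilde{\sigma}$ parameterized by arclength. The crossings of $\gamma$ with $\mathcal{L}$ define a walk in $T$, and between consecutive crossings $\gamma$ traces a train path in $\sigma \cap \Sigma_i$ of length at most $D$, where $D$ is the uniform bound from our hypothesis on the metric. The main technical step is to verify that this walk is non-backtracking in $T$: whenever $\gamma$ enters a piece through a lift $L \in \mathcal{L}$, it cannot exit through the same $L$. One checks this by inspecting the combinatorics of $\sigma \cap \Sigma_i$ in Figure \ref{figure:proper}, using the smoothness of train paths at switches together with the explicit form of $\sigma$ (obtained by folding $\hat{\tau}$) to rule out immediate reversals. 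This is the main obstacle of the proof.

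Granted non-backtracking, since non-backtracking walks in a tree are automatically geodesics, the sequence of edges $L_a, \ldots, L_b$ crossed by $\gamma$ between any two of its points $p$ and $q$ is a tree geodesic in $T$. Hence the geodesic in $\widetilde{\Sigma}$ from $p$ to $q$ must cross each $L_j$ with $a \leq j \leq b$, so by the disjoint $w$-collars this geodesic has length at least $2w(b - a + 1)$. On the other hand, the arc of $\gamma$ from $p$ to $q$ has length at most $D(b - a + 2)$. Combining these yields the quasi-geodesic inequality with $K = D/(2w)$ and $C$ depending only on $D$ and $w$, both uniform in $\gamma$. Finally, a bi-infinite $(K, C)$-quasi-geodesic in the Gromov hyperbolic space $\widetilde{\Sigma} \cong \mathbb{H}^2$ limits to two points of $\partial \widetilde{\Sigma}$ by the Morse lemma, and these endpoints must be distinct, since a quasi-geodesic joining a single point to itself has bounded (hence finite-length) image.
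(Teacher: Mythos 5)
Your proof is correct and follows essentially the same approach as the paper's: use the collar lemma to get a uniform lower bound on the distance between lines of $\mathcal L$, use the uniform upper bound on lengths of train paths within each $\Sigma_i$, and establish a non-backtracking property so that the sequence of $\mathcal L$-crossings progresses linearly. Your explicit use of the dual tree $T$ is a nice organizational device, but it is the same underlying argument --- in the paper the ``non-backtracking sequence'' language plays the role of the tree geodesic.

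One remark on the crucial step. You correctly identify the non-backtracking claim as the main technical point, and your phrasing (``whenever $\gamma$ enters a piece through a lift $L$, it cannot exit through the same $L$'') is equivalent to the paper's (``no train path of $\Sigma_i\cap\sigma$ is homotopic rel endpoints into $\partial\Sigma_i$''), since each component of $\widetilde\Sigma\setminus\bigcup\mathcal L$ is simply connected, so entering and exiting through the same lift $L$ is the same as being null-homotopic rel $L$. However, your justification --- ``rule out immediate reversals'' using smoothness at switches --- is somewhat misleading as stated: the issue is not only immediate U-turns but also longer excursions inside a single $\Sigma_i$ that return through the same lifted boundary geodesic. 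The paper's justification is more precise on this point: it observes that there are only finitely many homeomorphism types of pairs $(\Sigma_i,\Sigma_i\cap\sigma)$, each containing only finitely many train paths, and none of these finitely many train paths is boundary-parallel rel endpoints. You should appeal to this finiteness rather than only to local smoothness.
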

\begin{proof}
    We show this by looking at the intersections with lines in $\mathcal L$.
    Each lift of $\gamma_i$ is a bi-infinite geodesic in $\mathcal L$. 
    Note that there is a lower bound on the distance between any two lines of $\mathcal L$ by the collar lemma since the $\gamma_i$'s have bounded length.
    Moreover, the segment between any two consecutive intersections of the train path with $\mathcal L$ is a lift of a train path in $\sigma\cap \Sigma_i$ for some $i$, and thus its length is bounded above by a uniform constant due to our choice of metric.
    
    We claim that any train path of $\widetilde \sigma$ with endpoints on two lines of $\mathcal L$ is not homotopic, relative to endpoints, into a line in $\mathcal L$. Given the claim, any bi-infinite train path of $\widetilde \sigma$ intersects a bi-infinite non-backtracking sequence of geodesics in $\mathcal L$ at a uniformly bounded linear rate, from which the lemma follows.
    
    The claim above follows from the observations below. There are only two homeomorphism classes of pairs $(\Sigma_i, \Sigma_i \cap \sigma)$. Moreover, in each track $\Sigma_i \cap \sigma$, there
	are finitely many train paths. Finally, by the choice of $\gamma_i$'s, no train path of $\Sigma_i \cap \sigma$ is homotopic into $\partial \Sigma_i$ via a homotopy keeping the endpoints of the train path on the boundary, and no two distinct train paths of $\Sigma_i \cap \sigma$ are homotopic via such a homotopy. 
\end{proof}

Consequently any bi-infinite train path $t$ of $\widetilde \sigma$ may be straightened to a geodesic $\alpha$ in $\partial \widetilde \Sigma$ with the same endpoints on the Gromov boundary.
Moreover, the proof above implies that the sequences of lines in $\mathcal L$ intersecting $\alpha$ and $t$, respectively, are identical.
In addition, the intersections with $\mathcal L$ cut both $\alpha$ and $t$ into segments of length bounded above and below by uniform constants. 
We also see from the last observation made in the proof above that if $t_1,t_2$ are train paths in $\widetilde \sigma$ such that $t_i$ joins a line $L_i \in \mathcal L$ to a line $L_i' \in \mathcal L$, then $t_1$ and $t_2$
are equal if and only if $L_1=L_2$ and $L_1'=L_2'$. 
The following lemmas can be deduced from these facts.

\begin{lemma}\label{lemma: infinite intersection}
	Let $\{t_i\}_{i=1}^\infty$ be bi-infinite train paths of $\widetilde \sigma$
	straightening to geodesics $\{\alpha_i\}_{i=1}^\infty$ of $\widetilde \Sigma$.
	If $\alpha$ is a geodesic of $\widetilde \Sigma$ such that $\alpha_i\to \alpha$, then $\alpha$ intersects infinitely many lines in $\mathcal L$ at each end.
\end{lemma}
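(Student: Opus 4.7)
The plan is to argue by contradiction, exploiting the uniform upper bound on consecutive intersections of $\alpha_i$ with $\mathcal{L}$. By the observations immediately preceding this lemma, there is a uniform constant $D>0$ (independent of $i$) such that intersections with $\mathcal{L}$ cut each $\alpha_i$ into segments of length at most $D$; equivalently, any arclength sub-arc of $\alpha_i$ of length exceeding $D$ meets $\mathcal{L}$.

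I would first translate the convergence $\alpha_i\to\alpha$ into a usable geometric form. Fix a basepoint $x_0\in\widetilde\Sigma$ and unit-speed parameterize each $\alpha_i$ (and $\alpha$) so that the parameter $0$ corresponds to the point closest to $x_0$. Standard hyperbolic geometry guarantees that the unparameterized convergence $\alpha_i\to\alpha$ (i.e.\ convergence of endpoints on $\partial\widetilde\Sigma$) then produces uniform convergence on compact sub-intervals of $\R$ after a small parameter shift $s_i\to 0$.

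Focus on the positive end; the negative end is symmetric. Suppose for contradiction that $\alpha|_{[0,\infty)}$ meets only finitely many lines of $\mathcal{L}$. Then some tail $\alpha|_{[t_0,\infty)}$ is disjoint from $\mathcal{L}$, and since the complement $\widetilde\Sigma\setminus\mathcal{L}$ is an open disjoint union of its path components (each a lift of a filled-in subsurface $W_j$ or of a black disk), this tail lies entirely in a single such component $U$. Pick $T$ with $T-t_0>2D$. Then $\alpha|_{[t_0,T]}$ is a compact subset of the open set $U$, so has positive distance $\eta>0$ from $\widetilde\Sigma\setminus U$. For $i$ sufficiently large (and after absorbing the parameter shift $s_i$), the arc $\alpha_i|_{[t_0,T]}$ lies in the $\eta/2$-neighborhood of $\alpha|_{[t_0,T]}$, hence inside $U$. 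But then $\alpha_i|_{[t_0,T]}$ is an arclength sub-arc of $\alpha_i$ of length greater than $D$ that is disjoint from $\mathcal{L}$, contradicting the uniform spacing bound.

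The main subtlety I expect to encounter is the parameterization alignment of the previous paragraph---verifying that endpoint-convergence in $\partial\widetilde\Sigma$ really does yield uniform convergence on compact parameter intervals up to a vanishing shift. This is standard, and the contradiction is robust since it only requires the existence of a single sub-arc of length greater than $D$, so the shift $s_i\to 0$ is harmlessly absorbed into the choice of $T$ and $t_0$.
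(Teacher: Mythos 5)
Your proof is correct and rests on the same core contradiction as the paper's: a tail of $\alpha$ disjoint from $\mathcal{L}$ would force $\alpha_i$, for $i$ large, to contain a sub-arc of length exceeding the uniform spacing bound $D$ that avoids $\mathcal{L}$. The execution differs in a small but genuine way. The paper projects to $\Sigma$, invokes the auxiliary fact that the $\alpha_i$ (and hence the limit ray) avoid the black-disk boundary curves, and concludes the ray is trapped in some finite-type piece $\Sigma_j$ before deriving the contradiction there. You instead stay in $\widetilde\Sigma$ and argue directly from the openness of $\widetilde\Sigma\setminus\mathcal{L}$ together with compactness of $\alpha|_{[t_0,T]}$, giving a positive distance $\eta$ that the converging $\alpha_i|_{[t_0,T]}$ must respect. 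This sidesteps the black-disk observation entirely and avoids having to identify which complementary region contains the tail, so your route is slightly more economical for this lemma. One small point worth making explicit: for the tail of $\alpha$ to land in a single complementary component you are implicitly using that $\alpha$ is not itself a line of $\mathcal{L}$; this is harmless here (the $\alpha_i$ cross lines of $\mathcal{L}$ transversally and the limit is carried by $\widetilde\sigma$ rather than by $\mathcal{L}$), and the paper's proof makes the same tacit assumption. Your treatment of the parameterization alignment is the standard one and is fine.
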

\begin{proof}
	Suppose this is not the case. Then one end of $\alpha$ projects to a geodesic ray in $\Sigma$ disjoint from the curves $\gamma_j$.
	Note that each $\alpha_i$ projected to $\Sigma$ is disjoint from the boundary curve of each black disk in Figure \ref{figure:proper}.
	Hence the limiting ray above is also disjoint from such boundary components. So the ray must be trapped in some $\Sigma_j$.
	This implies that there is an arbitrarily long geodesic segment $\beta_i$ inside $\Sigma_j$ in the projection of $\alpha_i$ for $i$ sufficiently large. This contradicts the observation we made above, that $\alpha_i$ is divided by $\mathcal L$ into segments of uniformly bounded length.
\end{proof}

\begin{lemma}\label{lemma:ttclosed}
	Let $\{t_i\}_{i=1}^\infty$ be bi-infinite train paths of $\widetilde \sigma$
	straightening to geodesics $\{\alpha_i\}_{i=1}^\infty$ of $\widetilde \Sigma$.
	Suppose that $\alpha$ is a geodesic of $\widetilde \Sigma$. Then $\alpha_i\to \alpha$
	if and only if $\alpha$ is also carried by $\widetilde \sigma$ and
	for any finite sub-path $s$ of the train path $t$ defining $\alpha$, 
	$s$ is contained in $t_i$ for all large enough $i$. 
	In particular, the set of geodesics carried by $\widetilde \sigma$ is closed
	in the space of geodesics of $\widetilde \Sigma$.
\end{lemma}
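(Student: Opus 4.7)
The plan is to use the intersections with the collection $\mathcal L$ as a combinatorial coordinate system bridging convergence in the space of geodesics with the combinatorics of train paths on $\widetilde\sigma$. Three facts from the preceding discussion will be central: (i) the lines of $\mathcal L$ are uniformly separated and locally finite in $\widetilde\Sigma$, by the collar lemma applied to the length-$1$ geodesics $\gamma_j$; (ii) a train path of $\widetilde\sigma$ joining a given ordered pair of lines in $\mathcal L$ is unique, as noted right after Lemma \ref{lemma:straighten}; (iii) a bi-infinite train path and its straightening cross the same sequence of lines in $\mathcal L$, and these crossings divide both into segments of uniformly bounded length.

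For the forward direction, assume $\alpha_i \to \alpha$. I would first invoke Lemma \ref{lemma: infinite intersection} to extract a bi-infinite sequence $\ldots,L_{-1},L_0,L_1,\ldots$ of lines in $\mathcal L$ crossed in order by $\alpha$. Convergence on compact subsets of $\widetilde\Sigma$ together with local finiteness of $\mathcal L$ would guarantee that for every $N$, the geodesic $\alpha_i$ eventually crosses $L_{-N},\ldots,L_N$ in the same order. Fact (ii) would then force the sub-train-path of $t_i$ between consecutive crossings $L_k$ and $L_{k+1}$ to stabilize for large $i$; concatenating these stable finite train paths produces a bi-infinite train path $t$, which by Lemma \ref{lemma:straighten} straightens to a geodesic $\bar\alpha$. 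The finite subpath condition is then automatic, since any compact subpath of $t$ lies in some finite range of $\mathcal L$-crossings. To identify $\bar\alpha$ with $\alpha$, I would argue that since both cross $(L_k)_{k\in\Z}$ in order, their boundary endpoints on $\partial\widetilde\Sigma$ are both limits of the endpoints of the $L_k$ as $k \to \pm\infty$, and hence coincide.

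For the backward direction, exhaust $t$ by nested finite subpaths $s_N$ whose endpoints lie on $L_{-N}$ and $L_N$. The hypothesis $s_N \subset t_i$ for $i \geq i_N$, together with fact (iii), would imply that the corresponding segments of $\alpha_i$ and $\alpha$ share their $\mathcal L$-crossings, lie in a compact region depending only on $N$, and converge to one another. A diagonal argument across $N$ then delivers $\alpha_i \to \alpha$. The closure of the set of carried geodesics follows immediately: any limit $\alpha$ of carried geodesics $\alpha_i$ is itself carried, by the train path constructed in the forward direction.

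The hardest step will be the endpoint identification in the forward direction, namely ruling out that the assembled train path $t$ accidentally straightens to a distinct geodesic sharing $\alpha$'s sequence of $\mathcal L$-crossings. I expect this to require combining the uniform $(K,C)$-quasi-geodesic control from Lemma \ref{lemma:straighten} with the fact that the $L_k$'s escape to infinity along both $\alpha$ and $\bar\alpha$ at a uniformly bounded linear rate, which pins down the boundary endpoints on $\partial\widetilde\Sigma$ unambiguously.
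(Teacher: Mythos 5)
Your proposal is correct and follows essentially the same route as the paper's proof: both use Lemma \ref{lemma: infinite intersection}, the openness of transverse intersection, and the local finiteness and uniform separation of $\mathcal L$ to show that the $\mathcal L$-crossing sequences of the $\alpha_i$ stabilize pointwise, then invoke the uniqueness of train paths between pairs of lines of $\mathcal L$ to assemble the limiting train path $t$. The only differences are ones of emphasis: you also sketch the (easy) reverse direction, which the paper dismisses as obvious, and you explicitly flag the endpoint-identification step, which the paper treats implicitly here but carries out by the same ``endpoints of the $L_k$ converge'' argument in the proof of Lemma \ref{lemma:commonendpt}.
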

\begin{proof}
	We only focus on the less obvious direction: If $\alpha_i$ converges to a geodesic $\alpha$, then $\alpha$ is carried by $\widetilde \sigma$ 
	and for any finite sub-path $s$ of the train path $t$ defining $\alpha$, 
	$s$ is contained in $t_i$ for all large enough $i$.
	
	Each $\alpha_i$ intersects a bi-infinite sequence of lines in $\mathcal L$. By Lemma \ref{lemma: infinite intersection} and the fact that intersection is an open condition, 
	these sequences (with an appropriate choice of the $0$-th term) are pointwise eventually constant, with the limiting sequence equal to the lines in $\mathcal L$ intersecting $\alpha$. 
	As each straightening $\alpha_i$ intersects the same sequence of lines in $\mathcal L$ as the corresponding train path $t_i$ does,
	the limiting sequence above determines a train path $t$ carrying $\alpha$ with the desired properties.
\end{proof}

\begin{lemma}\label{lemma:commonendpt}
	Let $s$ and $t$ be bi-infinite train paths of $\widetilde \sigma$ straightening to
	geodesics $\beta$ and $\alpha$. Then $\beta$ and $\alpha$ share an endpoint
	$q \in \partial \widetilde \Sigma$ if and only if $s$ and $t$ share an infinite train path
	limiting to $q$.
\end{lemma}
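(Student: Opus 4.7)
The plan is to exploit the correspondence, developed in the proofs of Lemmas \ref{lemma:straighten} and \ref{lemma:ttclosed}, between bi-infinite train paths of $\widetilde \sigma$ and their ordered sequences of crossings with the family $\mathcal L$ of lifts of the $\gamma_i$. Three facts from that discussion drive the argument: (a) a bi-infinite train path of $\widetilde \sigma$ crosses the same ordered sequence of lines of $\mathcal L$ as its straightened geodesic; (b) consecutive crossings are separated by hyperbolic distance bounded above and below by uniform constants; and (c) two train paths of $\widetilde \sigma$ joining the same ordered pair of lines of $\mathcal L$ coincide. Since $\mathcal L$ is a locally finite family of pairwise disjoint simple geodesics in the simply connected space $\widetilde \Sigma \cong \mathbb{H}^2$, the dual graph $\mathcal T$ with vertices the components of $\widetilde \Sigma \setminus \bigcup \mathcal L$ and edges the lines of $\mathcal L$ is a tree, and each bi-infinite train path of $\widetilde \sigma$ determines a bi-infinite edge path in $\mathcal T$.

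The ``if'' direction is the easier one: if $s$ and $t$ share a common infinite subpath $u$ whose forward end limits to $q$, then $u$ crosses an infinite sequence $L_1, L_2, \ldots$ of lines of $\mathcal L$ whose crossing points diverge towards $q$. By (a), this same sequence is the tail of the crossing sequences of both $\alpha$ and $\beta$, and by (b) the crossings of $\alpha$ and $\beta$ along this sequence also diverge to infinity. Since the crossings of $u$ limit to $q$, so do the crossings of $\alpha$ and $\beta$, forcing $\alpha$ and $\beta$ to limit to $q$ as well.

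For the ``only if'' direction, I assume $\alpha$ and $\beta$ share a forward endpoint $q$. I aim to show that past some time, the forward rays of $\alpha$ and $\beta$ cross the same lines of $\mathcal L$ in the same order; then (c) identifies the corresponding segments of $s$ and $t$ as a common infinite subpath $u$ limiting to $q$. In tree-theoretic language, the two forward rays give rays in $\mathcal T$ converging to a common end, and any two rays in a tree limiting to the same end eventually coincide. The inputs needed to identify the common end are: the forward rays of $\alpha$ and $\beta$ are exponentially asymptotic in $\mathbb{H}^2$, and by the collar lemma applied to the length-$1$ curves $\gamma_i$ the lines of $\mathcal L$ are pairwise separated by a uniform distance $d_0 > 0$. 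Together these force the tails of the two rays to lie in the same components of $\widetilde \Sigma \setminus \bigcup \mathcal L$.

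The main obstacle is the degenerate case where $q$ is itself an endpoint of some line $L \in \mathcal L$. Distinct geodesics of $\mathbb{H}^2$ sharing an endpoint do not meet, so neither $\alpha$ nor $\beta$ crosses such an $L$; a priori they could lie on opposite sides of $L$, giving rays in $\mathcal T$ heading to distinct ends and breaking the tree argument. I plan to rule this out using the local structure of $\sigma$ near the curves $\gamma_i$: as noted in the proof of Lemma \ref{lemma:straighten}, there are only two homeomorphism types of pairs $(\Sigma_i, \Sigma_i \cap \sigma)$, and a direct inspection should show that any carried geodesic limiting to a hyperbolic fixed point of a stabilizer of $L$ must spiral into $L$ from a single prescribed side, forcing $\alpha$ and $\beta$ onto the same side of $L$ and letting the tree argument conclude.
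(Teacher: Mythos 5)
Your overall reduction — show that past some time the forward rays of $\alpha$ and $\beta$ cross the same ordered sequence of lines of $\mathcal L$, then use the uniqueness of train paths between lines of $\mathcal L$ to identify the corresponding segments of $s$ and $t$ — is the same as the paper's, just repackaged through the dual tree $\mathcal T$. The ``if'' direction and the tree-theoretic observation that two rays converging to the same end of a tree eventually coincide are both fine. The gap is in establishing that the two rays in $\mathcal T$ do converge to the same end, and in how you propose to handle the ``degenerate'' possibility.

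The assertion that ``exponentially asymptotic $+$ lines of $\mathcal L$ uniformly separated $\Rightarrow$ tails in the same components'' is not justified as written: two points in adjacent components of $\widetilde\Sigma\setminus\bigcup\mathcal L$ can be arbitrarily close (both near a wall), so closeness of $\alpha$ and $\beta$ does not by itself force them into the same chamber. What actually does the work is the observation the paper makes explicitly: the lines $L_n$ crossed by $\alpha$ near $q$ are nested with consecutive distances $\geq d_0$, so the nested half-planes they bound intersect in the single ideal point $q$; hence \emph{both} endpoints of $L_n$ converge to $q$. Once one knows this, each $L_n$ (for $n$ large) links $\{q,b\}$ where $b$ is the far endpoint of $\beta$, so $\beta$ crosses $L_n$, and symmetrically; this is exactly the ``same sequence of lines'' conclusion you want, and it bypasses the tree entirely. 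Your argument needs this step and does not supply it.

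Your ``degenerate case'' ($q$ an endpoint of some $L\in\mathcal L$) in fact cannot occur for a carried geodesic, and this falls out of the same endpoint-convergence argument: if $\alpha$ lies on one side of $L$, then the $L_n$ it crosses have one endpoint trapped in the sub-arc of $\partial\widetilde\Sigma$ between the far endpoint of $\alpha$ and the far endpoint $l$ of $L$, which is bounded away from $q$; this contradicts both endpoints of $L_n$ tending to $q$. So the case does not need a separate treatment, and the ``direct inspection of the local structure of $\sigma$'' you propose — besides being left undone — would establish a statement (one-sided spiraling) that is vacuous here, since no carried geodesic spirals into a line of $\mathcal L$ at all. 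In short, the proposal identifies the right target but leaves the central step (why the rays in $\mathcal T$ converge to the same end) unproved, where the paper supplies the short, complete argument via convergence of the $L_n$'s endpoints to $q$.
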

\begin{proof}
	We again focus on the less obvious direction: If $\beta$ and $\alpha$ share an endpoint
	$q \in \partial \widetilde \Sigma$, then $s$ and $t$ share an infinite train path
	limiting to $q$. As in the proof above, $\alpha$ intersects the same bi-infinite sequence of lines in $\mathcal L$ as $t$ does.
	Note that the endpoints of these lines must converge to $q$ since lines in $\mathcal L$ are at distances uniformly bounded away from zero. 
	This implies that this sequence of lines would eventually all intersect the ray in $\beta$ limiting to $q$, and vice versa.
	It easily follows that $s$ and $t$ intersect the same sequence of lines in $\mathcal L$ at one end, determining the desired infinite train path.
\end{proof}

Now we define a geodesic lamination on $\Sigma$ as follows. Recall that the weights on $\tau$ induce weights on $\widetilde{\tau}$. 
Via the union of foliated rectangles construction, the leaves of the rectangles glue to a set of train paths on $\widetilde{\tau}$ and they correspond to a set $\widetilde{\mathcal T}$
of train paths on $\sigma$ via the carrying map. 
Finally, we consider the train path
$t_*$ in $\sigma$ which passes through each surface $\Sigma_i$ ($i>0$) exactly twice and never
returns. 
This is the train path parallel to the border line on the left side of Figure \ref{figure:proper}. 
We define
$\mathcal S\vcentcolon = \widetilde{\mathcal T} \cup \{t_*\}$, a set of train paths
on $\sigma$. By Lemma \ref{lemma:straighten}, we may straighten the train paths in
$\mathcal S$ to geodesics. Denote the resulting set of geodesics by $\Lambda$. Since
the train paths in $\mathcal S$ do not cross, neither do the geodesics of $\Lambda$.

\begin{lemma}\label{lemma:lambdaclosed}
	The set of geodesics $\Lambda$ is closed as a subset of $\Sigma$. Therefore it is a
	geodesic lamination on $\Sigma$.
\end{lemma}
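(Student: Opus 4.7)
The plan is to reduce the statement to a closedness result on the level of train paths. Let $\{x_n\}\subset \Sigma$ be a sequence of points lying on leaves $\gamma_n\in \Lambda$ converging to some $x\in \Sigma$. Parametrize each $\gamma_n$ by arc length so that $\gamma_n(0)=x_n$; the uniform quasi-geodesic control from Lemma \ref{lemma:straighten} together with Arzel\`a--Ascoli lets us pass to a subsequence converging to a geodesic $\gamma$ through $x$. It therefore suffices to show that whenever $\gamma_n\in\Lambda$ converges to a geodesic $\gamma$ of $\Sigma$, we have $\gamma\in\Lambda$.

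Lift to the universal cover: after extracting a subsequence and applying deck transformations, we may assume lifts $\tilde\gamma_n$ of $\gamma_n$ converge to a lift $\tilde\gamma$ of $\gamma$. Each $\tilde\gamma_n$ is the straightening of a bi-infinite train path $\tilde t_n$ in $\widetilde\sigma$ whose projection to $\sigma$ lies in $\mathcal S = \widetilde{\mathcal T}\cup\{t_*\}$. By Lemma \ref{lemma:ttclosed}, $\tilde\gamma$ is also carried by $\widetilde\sigma$ via a train path $\tilde t$ with the property that every finite subpath of $\tilde t$ is contained in $\tilde t_n$ for all large $n$. Since the map from train paths to straightenings is injective (by Lemma \ref{lemma:commonendpt}), it remains to show that the projection $t$ of $\tilde t$ to $\sigma$ belongs to $\mathcal S$.

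The argument then splits into two cases. First, suppose infinitely many $\tilde t_n$ project to $t_*$. After translating by deck transformations we may assume all such $\tilde t_n$ equal a single lift $\tilde t_*$, and then $\tilde\gamma_n$ is constantly the straightening of $\tilde t_*$ along this subsequence, so $\tilde t = \tilde t_*$ and $t=t_*$. Otherwise, after passing to a further subsequence, each $\tilde t_n$ projects to some $u_n\in \widetilde{\mathcal T}$. Unfolding via $\widetilde\tau\to\hat\tau$ and the embedding $\hat\tau\hookrightarrow\sigma$, each $u_n$ corresponds to a leaf $L_n$ of the foliated rectangle complex $\widetilde F$, and we need to show that $\tilde t$ likewise corresponds (after unfolding) to a leaf of $\widetilde F$, which would place $t$ in $\widetilde{\mathcal T}$.

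The main obstacle is to verify that the set of train paths coming from leaves of $\widetilde F$ is closed under the convergence supplied by Lemma \ref{lemma:ttclosed}, with particular care for singular leaves. The non-singular leaves of $\widetilde F$ are parameterized continuously by horizontal coordinates inside the rectangles $R_i$; convergence of finite subpaths of $\tilde t_n$ to those of $\tilde t$ forces the corresponding horizontal coordinates of $L_n$ in each fixed rectangle of $\widetilde F$ to converge, so the limit assigns a well-defined vertical segment in each rectangle. If the limit coordinate avoids all singularities, $\tilde t$ corresponds to a non-singular leaf and we are done. If the limit hits a singularity, then for large $n$ the leaves $L_n$ must approach this singularity from a consistent side, and the left/right convention built into the construction of singular leaves produces exactly such a leaf. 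Thus $\tilde t$ corresponds to a leaf of $\widetilde F$, giving $t\in \widetilde{\mathcal T}\subset \mathcal S$, and hence $\gamma\in \Lambda$.
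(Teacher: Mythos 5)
Your overall strategy matches the paper's: reduce to closedness on the level of train paths via Lemma~\ref{lemma:ttclosed}, separate out $t_*$, and then analyze the foliated rectangle picture, taking care with singular leaves and the left/right convention. However, there is a genuine gap in how you split into cases. You branch on the \emph{sequence} (whether infinitely many $\tilde t_n$ project to $t_*$), whereas the paper branches on the \emph{limit} $t$. In your Case~2, where all $\tilde t_n$ project into $\widetilde{\mathcal T}$, you assert that the set of train paths coming from leaves of $\widetilde F$ is closed under the convergence supplied by Lemma~\ref{lemma:ttclosed}. This is false: train paths in $\widetilde{\mathcal T}$ do accumulate onto lifts of $t_*$. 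Indeed each leaf $L_i$ is dense in $\Lambda$ (Lemma~\ref{lemma:denseleaves}) and $\Lambda$ contains the straightening of $t_*$, so there are lifts of $L_i$, coming from $\widetilde{\mathcal T}$, converging to a lift of the straightening of $t_*$. In that situation your conclusion that ``$\tilde t$ corresponds to a leaf of $\widetilde F$'' is simply wrong, and the horizontal-coordinate argument has no common rectangle to run in, since $t_*$ avoids every branch bounding a quadrilateral.

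The fix is the paper's dichotomy: first check whether the limit train path $t$ projects to $t_*$; if so you are done immediately since $t_*\in\mathcal S$. Otherwise $t$ passes through a branch $b$ on the boundary of a quadrilateral of $\widetilde\sigma$, which $t_*$ does not, and then Lemma~\ref{lemma:ttclosed} forces $t_n$ to pass through $b$ for all large $n$. This simultaneously rules out $t_n$ being a lift of $t_*$ for large $n$ and supplies the common rectangle $R(b)$ in which to compare vertical segments. With that corrected, your treatment of the singular case---passing to a subsequence so the approaching segments $u_n$ lie on one fixed side of the limiting segment $u$, and choosing the singular leaf that merges from and splits to that side---agrees with the paper's argument. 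A further minor point: in your Case~1, you cannot freely ``translate by deck transformations'' to make all $\tilde t_n$ equal a single lift, as that destroys the fixed convergence $\tilde\gamma_n\to\tilde\gamma$; the observation is correct but should be phrased downstairs (if $\gamma_n$ equals the straightening of $t_*$ along a subsequence, then so does the limit).
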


We postpone the proof of Lemma \ref{lemma:lambdaclosed} and first discuss the train paths in $\mathcal S$ in more detail. 
Any nonsingular leaf of $\widetilde F$ uniquely determines a train path in $\sigma$, and it is determined by any segment of the leaf contained in a foliated rectangle of $\widetilde F$.
Now we describe train paths of $\widetilde{\mathcal T}$ corresponding to singular leaves of $\widetilde{F}$.
Note that there is a sequence of quadrilaterals $Q_i$ in the complement of the train track $\sigma$; see the left of Figure \ref{figure:proper}.
In each $Q_i$, $i\ge1$, a pair of singularities $\mathfrak s_i$ and $\mathfrak s_{-i}$ sit at the two opposite horizontal corners, corresponding to the singularities $s_i$ and $s_{-i+1}$ in Figure \ref{figure: foliated}. 
For each $i \in \Z_+$, there is a unique singular leaf $\ell_i$ containing $\mathfrak s_i$ and $\mathfrak s_{i+1}$ and a unique singular leaf $\ell_{-i}$ containing $\mathfrak s_{-i}$ and $\mathfrak s_{-i-1}$. 
Moreover, there is a singular leaf $\ell_0$ containing $\mathfrak s_{-1}$ and $\mathfrak s_1$. 
Thus, we have a collection $\ldots, \ell_{-1}, \ell_0, \ell_1, \ldots $
of singular leaves of $\widetilde F$ indexed by the integers, corresponding to the ones investigated in Lemma \ref{lemma:denseleaves}. 
This gives rise to a
collection $\ldots, t_{-1}, t_0,t_1,\ldots$ of train paths in $\mathcal S$. 
Note that for each $i\in\Z$, $\ell_i$ shares a ray with $\ell_{i-1}$
and $\ell_{i+1}$, respectively, so that $t_i$ shares a half-infinite sub-train-path with $t_{i-1}$ and $t_{i+1}$, respectively.

For each $i$, the train path $t_i$ gives rise to a leaf $L_i$ of $\Lambda$, corresponding to the leaf $\ell_i$ in $\widetilde{F}$ studied in Lemma \ref{lemma:denseleaves}.

\begin{proof}[Proof of Lemma \ref{lemma:lambdaclosed}]
    Lift $\Lambda$ to a set of geodesics $\widetilde{\Lambda}$ on $\widetilde \Sigma$.
    Let $\lambda_1,\lambda_2,\ldots$ be geodesics in $\widetilde{\Lambda}$ converging to the geodesic $\lambda$. By Lemma \ref{lemma:ttclosed}, $\lambda$ is carried by $\widetilde \sigma$. Let $t$ be a train path of $\widetilde \sigma$ straightening to $\lambda$. If $t$ projects to $t_*$ in $\Sigma$ then $\lambda$ is in $\widetilde \Lambda$ by definition. Otherwise, $t$ passes through some branch $b$ on the boundary of a quadrilateral in the complement of $\widetilde \sigma$. Hence, $t_i$ passes through $b$ for $i$ sufficiently large. Without loss of generality we assume that $t_i$ passes through $b$ for every $i$.

    Lift $\widetilde F$ to a foliation of a subset of $\widetilde \Sigma$. Then for each $i$, $t_i$ is the train path defined by a (possibly singular) leaf of $\widetilde F$. That is, after collapsing the rectangles of $\widetilde F$ to branches, and composing with the carrying map to $\widetilde \sigma$, we obtain $t_i$. Thus, for each $t_i$ there is a unique vertical line segment $u_i$ in the rectangle $R(b)$ which the leaf corresponding to $t_i$ passes through. Up to passing to a subsequence, the segments $u_i$ converge to a vertical segment $u$ in $R(b)$, and we may further assume all segments $u_i$ lie on the same side of $u$, say the left side (for a chosen orientation of $u$). Let $r_i$ be the (possibly singular) leaf defining $t_i$ (which passes through $u_i$). Furthermore, let $r$ be the (possibly singular) leaf which passes through $u$, and, when given the orientation induced by $u$, merges from or splits into the left rectangle at every singularity that it passes through (if there are indeed any such singularities). Then $u_i$ converges to $u$, since for any finite sequence of rectangles that $u$ passes through, $u_i$ also passes through the same sequence for $i$ sufficiently large. Hence $t_i$ converges to the train path defined by $u$, which is therefore equal to $t$. In particular, $\lambda$ is a leaf of $\widetilde \Lambda$.
\end{proof}

The following lemma is the key to obtain our infinite clique of $2$-filling rays.
\begin{lemma}\label{lemma: shape of complementary component}
	The complementary component to $\Lambda$ containing $p$ is a once-punctured
	ideal polygon with countably infinitely many ends, exactly one of which is the limit of
	the others. The sides of the ideal polygon are the leaves $\{L_i\}_{i\in\Z}$.
\end{lemma}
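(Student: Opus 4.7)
The plan is to work in the universal cover $\widetilde\Sigma\cong\mathbb H^2$, lift $\Omega$ to a connected component $\widetilde\Omega$ of $\widetilde\Sigma\setminus\widetilde\Lambda$ which has a chosen parabolic fixed point $\widetilde p\in\partial\widetilde\Sigma$ (corresponding to $p$) as a cusp, and then descend the resulting polygon structure back down to $\Sigma$. The argument proceeds in three steps: first showing that adjacent lifted leaves share an ideal vertex, then identifying the sides of $\widetilde\Omega$ as exactly $\{\widetilde L_i\}_{i\in\Z}$, and finally showing the vertex sequence accumulates at $\widetilde p$.

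First I would verify that adjacent lifted leaves $\widetilde L_{i-1}$ and $\widetilde L_i$ share an endpoint in $\partial\widetilde\Sigma$. By construction the singular leaves $\ell_{i-1}$ and $\ell_i$ of $\widetilde F$ share the ray $r_i$ starting at the singularity $s_i$, so under the carrying map $\widetilde F \to \widetilde\tau$ and passage to $\sigma$, the train paths $t_{i-1}$ and $t_i$ share a half-infinite sub-train-path. Lifting so that these sub-train-paths coincide in $\widetilde\sigma$, Lemma~\ref{lemma:commonendpt} produces a common endpoint $v_i\in\partial\widetilde\Sigma$ of the straightened geodesics $\widetilde L_{i-1}$ and $\widetilde L_i$, which will play the role of the ideal vertex between the two adjacent sides.

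Second, I would identify the sides of $\widetilde\Omega$. In $\widetilde F$, the singular leaves $\{\ell_i\}_{i\in\Z}$, together with the lower boundaries of the connecting rectangles between the $R_i$'s (see Figure~\ref{figure: foliated}), cobound a single unbounded ``cusp'' region $W\subset\widetilde F$ through which no other leaves of $\widetilde F$ pass. Transporting $W$ via the carrying map and then straightening through Lemmas~\ref{lemma:ttclosed}--\ref{lemma:commonendpt} yields an open region of $\widetilde\Sigma$ bounded precisely by the geodesics $\widetilde L_i$ and the ideal vertices $v_i$ from the first step; this region contains a lift $\widetilde p$ as a cusp end, so it must coincide with $\widetilde\Omega$. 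One must then rule out any other leaf of $\widetilde\Lambda$ from $\partial\widetilde\Omega$: in particular, lifts of straightenings of non-singular train paths in $\widetilde{\mathcal T}$ cannot enter $W$ in $\widetilde F$, and the lifted straightening of $t_*$ is separated from $\widetilde p$ by the $\widetilde L_i$'s.

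Third, I would show that $v_i\to\widetilde p$ as $|i|\to\infty$, which gives $\widetilde\Omega$ the structure of a once-punctured ideal polygon with countably infinitely many ends, exactly one of which (the cusp at $\widetilde p$) is the limit of the others. By Lemma~\ref{lemma:denseleaves}, the shared ray $r_i$ of $\ell_{i-1}$ and $\ell_i$ in $\widetilde F$ traverses rectangles $R_j$ of the cyclic cover at arbitrarily far indices as $|i|$ grows; under the carrying map this forces the corresponding shared half-infinite sub-train-paths between $t_{i-1}$ and $t_i$ to wind longer and longer through the portion of $\sigma$ approaching the cusp at $p$, and by Lemma~\ref{lemma:commonendpt} the endpoints $v_i\in\partial\widetilde\Sigma$ must then converge to $\widetilde p$. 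Projecting back to $\Sigma$ gives the claimed polygon structure, with the puncture at $p$ as the unique limit end. The principal obstacle is the second step: carefully matching the combinatorial structure of complementary regions in $\widetilde F$ with the geodesic structure of $\widetilde\Lambda$, and in particular confirming that neither non-singular leaves nor the straightening of $t_*$ contribute additional sides to $\widetilde\Omega$.
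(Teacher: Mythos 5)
Your overall plan (work in $\widetilde\Sigma$, pair adjacent lifts $\widetilde L_{i-1}$, $\widetilde L_i$ via a common endpoint from Lemma~\ref{lemma:commonendpt}, identify the sides of $\widetilde\Omega$, then understand the accumulation of the ideal vertices) is sensible, and the first two steps broadly parallel the paper. But your third step contains a genuine error.

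You claim that the shared vertices $v_i$ converge to $\widetilde p$ as $|i|\to\infty$, so that the cusp at $\widetilde p$ is the unique non-isolated end. This is incorrect. Letting $g$ be the parabolic fixing $\widetilde p$ and $\widetilde R_\infty$ the lift (with endpoints $\widetilde p$ and $z$) of the non-filling ray $R_\infty$ of Figure~\ref{figure:proper}, what is actually true is $v_i\to gz$ as $i\to+\infty$ and $v_i\to z$ as $i\to-\infty$; in the quotient by $g$ these give a single ideal vertex, which is the unique non-isolated end. The cusp at $p$ is an isolated end. Moreover, the justification you give cannot work: appealing to the density of the ray $r_i$ (Lemma~\ref{lemma:denseleaves}) together with the fact that its starting singularity $s_i$ sits deeper in the cyclic cover as $|i|$ grows says nothing about where the ideal endpoint $v_i$ of the straightened shared sub-train-path lands, since $r_i$ wanders everywhere. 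What actually drives the convergence is that the rays $R_i$ (Figure~\ref{figure:rays}) shadow $R_\infty$ through $\Sigma_1,\ldots,\Sigma_{|i|-1}$ before branching off, hence $R_i\to R_\infty$ as $|i|\to\infty$; lifting so that $\widetilde R_i$ is based at $\widetilde p$ and lies between $\widetilde R_\infty$ and $g\widetilde R_\infty$, the non-$\widetilde p$ endpoint of $\widetilde R_i$ is exactly $v_i$, giving the stated limits. Without an argument of this kind tracking how the rays $R_i$ shadow $R_\infty$, the accumulation structure of the vertices is not established, and the accumulation point you name is the wrong one.
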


In order to prove Lemma \ref{lemma: shape of complementary component}, we look at some particular rays, which we denote as $R_i$ with $i \in\Z\cup\{\infty\}$. We apologize for reusing the notation and warn the reader not to confuse them with the rectangles $R_i$ discussed earlier (which play no role in the rest of the paper).

First we define a ray $R_\infty$ with one end at the isolated puncture $p$. This is the geodesic ray pictured on the right of Figure \ref{figure:proper}. 
It is non-proper and not filling (as defined at the end of Section \ref{sec:background}).
Observe that it is disjoint from $\Lambda$.

We also define a sequence of geodesic rays $R_i$ for $i\in \Z \setminus \{0\}$ as follows.
The ray $R_i$ is obtained by following $R_\infty$ until it enters the quadrilateral $Q_i$ with
ends corresponding to the singularities $\mathfrak s_{\pm i}$. Thereafter, it passes through
$\mathfrak s_i$ and follows the common half-infinite sub-train-path of $t_{i-1}$ and
$t_i$ (if $i>0$) or of $t_i$ and $t_{i+1}$ (if $i<0$). 
The path just described may be homotoped to be simple
and disjoint from any given leaf of $\Lambda$, and furthermore, none of its arcs in a
component of $\Sigma\setminus \sqcup_i \gamma_i$ is homotopic into the boundary. 
Hence the path may be straightened to a geodesic ray $R_i$; 
see Figure \ref{figure:rays}.

\begin{figure}[h]

	\centering

	\def\svgwidth{0.7\textwidth}
	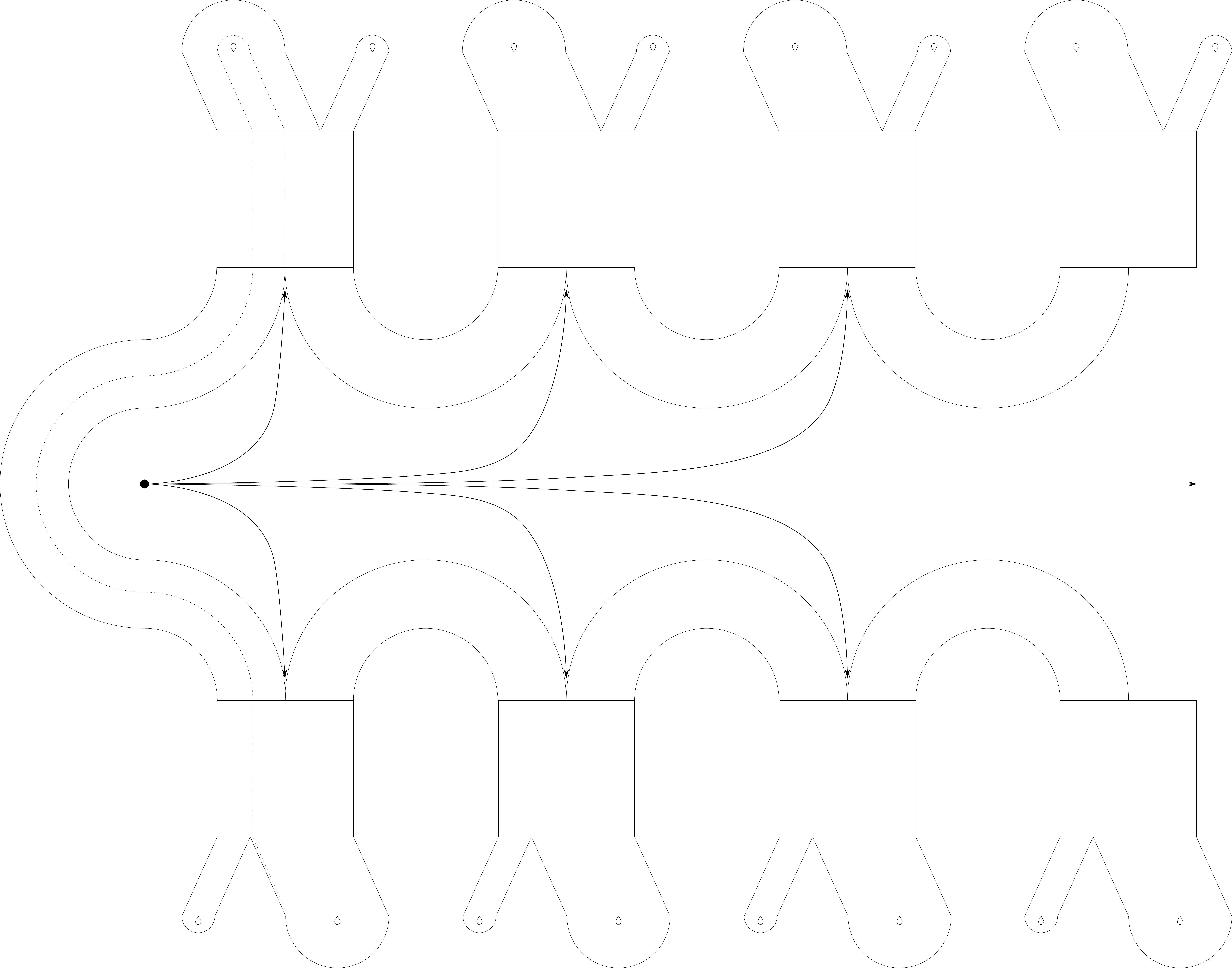

	\caption{The rays $R_i$ and $R_\infty$. They enter the union of foliated rectangles at
		a cusp and thereafter follow a leaf of the corresponding singular foliation (the
		dotted lines in the figure). For ease of presentation, the picture has been
		``unwrapped'' before being embedded into $\Sigma$.}
	\label{figure:rays}
\end{figure}

Now we prove Lemma \ref{lemma: shape of complementary component}.
\begin{proof}
	Recall that the surfaces $\Sigma_j$ are the complementary subsurfaces
	of the dotted curves $\gamma_j$ and black disks in Figure \ref{figure:proper}. 
	We see that for $i\gg 0$ or $i\ll 0$, $R_i$ and $R_\infty$ pass through
	many of the surfaces $\Sigma_j$ in the same order, and in each such $\Sigma_j$, the arcs of
	$R_i$ and $R_\infty$ are homotopic, keeping endpoints on the boundary. Hence we have
	$R_i\to R_\infty$ as $i\to \infty$ and as $i\to -\infty$. Furthermore, $R_i$ is asymptotic
	to $L_i$ and $L_{i-1}$ if $i>0$ and $R_i$ is asymptotic to $L_i$ and $L_{i+1}$ if $i<0$.
	Finally, the rays $R_i$ occur in the order
	\[
	 R_\infty < \ldots < R_2 < R_1 < R_{-1} < R_{-2} < \ldots < R_\infty
	\]
	in the circular order on geodesics asymptotic to $p$.
	
	Lift $R_\infty$ to a ray $\widetilde{R}_\infty$ in the universal cover $\widetilde \Sigma$.
	Then $\widetilde{R}_\infty$ has one endpoint at a lift $\widetilde p$ of $p$ on $\partial \widetilde\Sigma$ and
	the other endpoint at a point $z\in \partial \widetilde\Sigma$. Let $g$ be a generator of the cyclic
	subgroup of $\pi_1(\Sigma)$ fixing $\widetilde p$. There is a unique lift $\widetilde{R}_i$ of $R_i$ based at
	$\widetilde p$, between $\widetilde{R}_\infty$ and $g\cdot \widetilde{R}_\infty$. Up to
	replacing $g$ by $g^{-1}$, we see that $\widetilde{R}_i \to g\widetilde{R}_\infty$ as
	$i\to +\infty$ and $\widetilde{R}_i \to \widetilde{R}_\infty$ as $i\to -\infty$. Moreover,
	there is a lift $\widetilde{L}_i$, for $i\in \Z$, such that
	\begin{itemize}
		\item $\widetilde{L}_0, \widetilde{R}_{-1}, \widetilde{R}_1$ are the sides of an ideal
		triangle;
		\item $\widetilde{L}_i, \widetilde{R}_i, \widetilde{R}_{i+1}$ are the sides of an ideal
		triangle for $i>0$;
		\item $\widetilde{L}_i, \widetilde{R}_i, \widetilde{R}_{i-1}$ are the sides of an ideal
		triangle for $i<0$.
	\end{itemize}
	
	Consequently, $\widetilde{R}_\infty$, $g\widetilde{R}_\infty$, and the $\widetilde{L}_i$'s
	form the sides of a polygon with countably infinitely many ends. Each of these ends
	is isolated except $z$ and $gz$, which are limits of the others. After quotienting by $g$,
	we obtain a once-punctured ideal polygon with a countable set of ends, exactly one of which
	is the limit of the others, as claimed.
\end{proof}

Finally we prove Theorems \ref{theorem:lamination} and \ref{theorem:infiniteclique}.
\begin{proof}[Proof of Theorem \ref{theorem:infiniteclique}]
	By Lemma \ref{lemma: realize S}, any infinite type surface $S$ with at least one isolated puncture $p$ is homeomorphic to our surface $\Sigma$ by a suitable choice in the construction.
	Construct the lamination $\Lambda$ and use the notation as above.
	We see from Lemma \ref{lemma: shape of complementary component} that every simple ray based at $p$, except
	$R_i$ for $i\in \{\infty\}\cup\Z \setminus \{0\}$, intersects $L_i$ for some $i\in \Z$. 
	Moreover, $L_i$ has a half leaf asymptotic to $R_i$, and this half leaf is dense in $\Lambda$ by Lemma \ref{lemma:denseleaves}. Hence each $R_i$ accumulates onto $\Lambda$. 
	Thus, every simple ray based at $p$, not contained in
	$\{R_i\}_{i\in \{\infty\}\cup \Z \setminus \{0\}}$ intersects $R_i$ for every
	$i\in \Z\setminus \{0\}$. Thus each ray $R_i$ is $2$-filling, only disjoint from one non-filling long ray $R_\infty$,
	and $\{R_i\}_{i\in \Z \setminus \{0\}}$ is an infinite clique of $2$-filling rays.
\end{proof}

\begin{proof}[Proof of Theorem \ref{theorem:lamination}]
	Let $S=\Sigma$ with the hyperbolic structure and lamination $\Lambda$ as above. In the construction of $\Lambda$, choose $\alpha=[0;2m+1,2m+2,2m+2,\cdots]$ as in Example \ref{example: non dense full orbit} for some $m\ge2$, which satisfies the assumptions of Theorem \ref{theorem:skewprod}.
	Each leaf $L_i$ described above is dense in $\Lambda$ by Lemma \ref{lemma:denseleaves}, so $\Lambda$ is topologically transitive.
	On the other hand, the full orbit we examined in Example \ref{example: non dense full orbit} has (forward and backward) Birkhoff sum always non-positive, so the corresponding leaf misses infinitely many rectangles, and thus it is not dense. There is an obvious $\Z$ action on $\widetilde F$ in Figure \ref{figure: foliated}. It is straightforward to see that the $\Z$-orbit of this leaf yields infinitely many distinct non-dense leaves, which completes our proof.
\end{proof}

\bibliographystyle{plain}
\bibliography{rotations}

\begin{thebibliography}{1}

\bibitem{Aramayona}
Javier Aramayona, Ariadna Fossas, and Hugo Parlier.
\newblock Arc and curve graphs for infinite-type surfaces.
\newblock {\em Proc. Amer. Math. Soc.}, 145(11):4995--5006, 2017.

\bibitem{RayGraph}
Juliette Bavard.
\newblock Hyperbolicit\'{e} du graphe des rayons et quasi-morphismes sur un
  gros groupe modulaire.
\newblock {\em Geom. Topol.}, 20(1):491--535, 2016.

\bibitem{infinite_clique}
Juliette Bavard.
\newblock An infinite clique of high-filling rays in the plane minus a cantor
  set, 2021.

\bibitem{GromBd}
Juliette Bavard and Alden Walker.
\newblock The {G}romov boundary of the ray graph.
\newblock {\em Trans. Amer. Math. Soc.}, 370(11):7647--7678, 2018.

\bibitem{Simult}
Juliette Bavard and Alden Walker.
\newblock Two simultaneous actions of big mapping class groups.
\newblock {\em Trans. Amer. Math. Soc.}, 376(11):7603--7650, 2023.

\bibitem{Heavy}
Michael Boshernitzan and David Ralston.
\newblock Continued fractions and heavy sequences.
\newblock {\em Proc. Amer. Math. Soc.}, 137(10):3177--3185, 2009.

\bibitem{2Fill}
Lvzhou Chen and Alexander~J. Rasmussen.
\newblock Laminations and 2-filling rays on infinite type surfaces.
\newblock {\em Ann. Inst. Fourier (Grenoble)}, 73(6):2305--2369, 2023.

\bibitem{1/2-heavy}
David Ralston.
\newblock 1/2-heavy sequences driven by rotation.
\newblock {\em Monatsh. Math.}, 175(4):595--612, 2014.

\bibitem{Richards}
Ian Richards.
\newblock On the classification of noncompact surfaces.
\newblock {\em Trans. Amer. Math. Soc.}, 106:259--269, 1963.

\end{thebibliography}

\end{document}